\documentclass[onefignum,onetabnum]{siamart251216}

\usepackage[T1]{fontenc}
\usepackage{lmodern}
\usepackage{microtype}

\usepackage{amssymb}
\usepackage{bm}              
\usepackage{mathtools}       
\usepackage{physics}         
\usepackage{cases}           
\usepackage{empheq}          
\usepackage{tensor}          
\usepackage{ntheorem}
\newtheorem{remark}{Remark}

\usepackage{subcaption}      
\usepackage{float}
\usepackage{placeins}        
\usepackage{booktabs}        
\usepackage{multirow}
\usepackage{multicol}
\usepackage{array}
\usepackage{longtable}
\usepackage{tabularx}
\usepackage{colortbl,xcolor}

\usepackage{algorithmicx}
\usepackage{algpseudocode}
\usepackage{listings}
\usepackage{enumitem}
\usepackage{tikz}
\usetikzlibrary{shapes.geometric, arrows.meta}
\usepackage{pgfplots}
\pgfplotsset{compat=1.18}

\crefalias{figure.caption}{figure}
\crefalias{table.caption}{table}

\newsiamthm{property}{Property}
\newsiamthm{assumption}{Assumption}
\newsiamthm{derivation}{Derivation}

\newsiamremark{example}{Example}
\newsiamremark{question}{Question}
\newsiamremark{answer}{Answer}

\crefname{property}{Property}{Properties}
\crefrangelabelformat{property}{#3#1#4--#5#2#6}
\crefname{derivation}{Derivation}{Derivations}
\crefrangelabelformat{derivation}{#3#1#4--#5#2#6}
\crefname{assumption}{Assumption}{Assumptions}
\crefrangelabelformat{assumption}{#3#1#4--#5#2#6}



\newcommand{\pt}{\partial_t}
\newcommand{\px}{\partial_x}
\newcommand{\pvv}{\partial_v}

\newcommand{\Dt}{\Delta t}

\newcommand{\mb}[1]{\boldsymbol{#1}}

\newcommand{\Phil}{\Phi^L}

\newcommand{\Iz}{\mathcal{I}_z}

\newcommand{\LL}{\textcolor{black}}
\headers{SL Bi-Fidelity for uncertain VP}{X. Cai, J. Guo, and L. Liu}

\title{A Structure-Preserving Semi-Lagrangian based bi-fidelity method for the Vlasov--Poisson system with uncertainties
\thanks{Submitted to the editors October 16, 2026.
\funding{
The work of the third author was supported by the National Key R\&D Program of China (2021YFA1001200), the Ministry of Science and Technology of China, the General Research Fund of the Research Grants Council of Hong Kong (14303022, 14301423, 14307125, 14310726).
The work of the first author and the second author was partially supported by the NSFC (No 12671463), Guangdong basic and applied basic research foundation [2025A1515012182], Guangdong and Hong Kong Universities “1+1+1” Joint Research Collaboration Scheme [2025A0505000014].
}}}

\author{
  Xiaofeng Cai\thanks{Department of Mathematics, Faculty of Arts and Sciences, Beijing Normal University, Zhuhai,
519087, P.R. China (\email{xfcai@bnu.edu.cn}).}
  \and
  Jiaojiao Guo\thanks{Department of Mathematics, Faculty of Arts and Sciences, Beijing Normal University, Zhuhai,
519087, P.R. China (\email{gjiaojiao@mail.bnu.edu.cn}).}
  \and
  Liu Liu\thanks{The Chinese University of Hong Kong, Hong Kong (\email{lliu@math.cuhk.edu.hk}).}
}

\begin{document}

\maketitle

\begin{abstract}
Uncertainty quantification for the Vlasov–Poisson (VP) systems entails repeated dynamic simulations, incurring immense computational costs.
We develop a structure-preserving Semi-Lagrangian (SL) bi‑fidelity approximation of macroscopic observables.
The MUSCL is used as a low-fidelity solver for pressureless Euler--Poisson system, allowing for a more cost-effective exploration of the stochastic parameter space.
A conservative SL scheme combined with high-order discontinuous Galerkin is used as a high-fidelity solver for collisionless VP system, whose mass conservation, positivity preservation, and low numerical dissipation provide a more reliable high-fidelity snapshots for bi-fidelity reconstruction.
We further derive a conditional
error bound that separates projection, discretization, and model errors.
Numerical experiments for both deterministic and uncertain VP problems, including multispecies and higher-dimensional settings, demonstrate that the proposed framework achieves accurate and robust uncertainty propagation.
\end{abstract}

\begin{keywords}
  Vlasov--Poisson systems, conservative Semi-Lagrangian, bi-fidelity, uncertainty quantification, error estimate
\end{keywords}

\begin{MSCcodes}
  65M25, 35Q83, 65M60, 65C20
\end{MSCcodes}

\section{Introduction}
The Vlasov–Poisson (VP) system is a fundamental kinetic model for describing collisionless plasmas and self-gravitating systems. In contrast to macroscopic fluid models, it evolves the distribution function \(f(\mathbf{x},\mathbf{v},t)\) in phase space and is therefore capable of capturing important kinetic phenomena, including wave--particle interactions, collisionless Landau damping, and the formation of fine-scale structures through phase-space filamentation \cite{Bedrossian2022544, Huang2025572}.
These features, however, come at a substantial computational cost. In particular, the phase-space dimension increases rapidly with the spatial and velocity dimensions, making direct numerical simulation increasingly expensive for multidimensional problems.
The Particle-in-Cell (PIC) methods \cite{CAMPOREALE2016,KORMANN2021} are widely used for kinetic plasma high-dimensional simulations, but their particle-based nature introduces statistical noise and may require a large number of particles to obtain accurate macroscopic quantities.
Eulerian approaches \cite{Banks2019,CAI2022SIAM,Filbet2001394,Manzini2017,YE2024} avoid this statistical noise by directly discretizing the distribution function, but their computational cost grows significantly with the dimension of phase space.

Semi-Lagrangian (SL) methods \cite{SONNENDRUCKER1999201} provide an attractive approach for reducing the computational cost of Eulerian simulations. SL methods are not subject to the conventional Courant--Friedrichs--Lewy (CFL) restriction on the time step. This allows relatively large time steps while retaining a fully Eulerian representation of the distribution function. 
High-order reconstruction techniques, such as discontinuous Galerkin (DG) and finite-volume (FV) methods, can further improve the resolution of fine phase-space structures and reduce numerical diffusion \cite{CAI2021110036,XIONG2014,ZHENG2022114973}.
In addition, preserving fundamental physical properties is also important, since excessive numerical dissipation or violation of conservation laws may significantly affect the long-time behavior of the computed solution.
High-order conservative SL discretizations therefore provide some appealing frameworks for accurate and physically reliable kinetic simulations \cite{CHEN2025,CROUSEILLES20101927,Liu2023}.
Current research also focuses on extending SL method to genuinely high-dimensional settings.
To this end, dimensional-reduction and data-compression techniques, including adaptive mesh, sparse-grid approximations, and low-rank tensor representations, have increasingly been incorporated into SL frameworks \cite{Bokanowski2013,Kormann2015,ZHENG2025113970}. 
These approaches provide a promising route toward practical simulations of high-dimensional problems.

While classic VP models describe idealized deterministic dynamics, physical parameters, initial conditions, or external fields are not known exactly  in many applications, which naturally modeled as random variables,  and motivates the development of uncertainty quantification (UQ) methods for kinetic equations \cite{Pareschi2021}.
The computational challenge becomes substantially more severe when uncertainties are incorporated into the VP system.
Classical approaches to UQ include stochastic spectral methods such as polynomial chaos and stochastic Galerkin formulations, stochastic collocation, and sampling-based techniques such as Monte Carlo, quasi-Monte Carlo, etc.\cite{Giles2008,JIN201715,Syed2024,XIAO2021110139,XiuD200722,XIU200527}.
Despite their generality, these methods can become prohibitively expensive when each deterministic realization itself requires a high-resolution kinetic computation, particularly in high-dimensional random spaces or when a large number of samples or stochastic degrees of freedom is required.
This motivates the development of efficient reduced-cost approaches that can retain the essential kinetic information while substantially reducing the number of expensive VP simulations.

Bi-fidelity and, more generally, multi-fidelity methods provide a promising strategy for high-dimensional UQ, with a series of studies in \cite{LIN202547, Liu2020, LIU2026115076, Narayan2014, Peherstorfer2018, Zhu2017}. 
The key idea is to use the low-fidelity model to explore the parameter space and identify representative parameter samples, at which the high-fidelity model is then evaluated.
The resulting high-fidelity snapshots are used to construct a reduced approximation over the stochastic parameter domain. The efficiency of this approach relies on the existence of a sufficiently strong correlation between the high-fidelity and low-fidelity models.
At the same time, the quality of the high-fidelity snapshots is crucial: numerical errors, excessive dissipation, or violations of physical constraints in the high-fidelity solver can be inherited by the reduced representation and consequently affect the bi-fidelity approximation over the entire parameter domain.
For kinetic equations, where fine-scale structures and positivity are particularly important, an accurate and structure-preserving high-fidelity solver is therefore an essential component of an effective bi-fidelity framework.

Based on the analysis above, we develop a structure-preserving SL bi‑fidelity approximation of uncertain VP systems based on conservative SL (CSL) snapshots. The main contributions of this work can be summarized as follows:
\begin{itemize}
    \item[(i)] \textbf{CSL high-fidelity solver.}
    We develop Strang-splitting CSL method with high-order DG reconstruction as the high-fidelity solver for the uncertain VP system. The proposed solver combines large time-step efficiency, high-resolution phase-space reconstruction, positivity-preserving, and mass conservation, providing reliable high-fidelity snapshots for uncertainty quantification.
    \item[(ii)] \textbf{An error analysis.}
    We exploit the pressureless Euler--Poisson (EP) system as a low-fidelity surrogate that retains the essential macroscopic behavior of the VP system while eliminating the expensive velocity-space discretization. Based on this kinetic–macroscopic correspondence, we develop a bi-fidelity approximation framework and establish an error analysis.
    \item[(iii)] \textbf{Numerical simulation.}
    We demonstrate the applicability of the proposed method to challenging kinetic problems, including multispecies configurations and higher-dimensional settings with complex phase-space structures.
\end{itemize}

The remainder of this paper is organized as follows.
Section~2 introduces the VP system and its corresponding pressureless EP model.
Section~3 presents a bi-fidelity framework based on a Strang-splitting structure-preserving SL high-fidelity solver and a second-order MUSCL low-fidelity solver, with explanations of positivity and conservation also included.
Section~4 develops the error analysis for the bi-fidelity approximation.
Numerical results are presented in Section~5, and concluding remarks and directions for future works in Section~6.
\section{Microscopic and Macroscopic Models}
This section introduces the VP system and the pressureless EP system, which serve as the high- and low-fidelity models, respectively. 
We summarize their relevant structural, conservation properties, and discuss the connection between the microscopic and macroscopic descriptions. 

\subsection{Vlasov--Poisson System}
Let \(f(\mathbf{x},\mathbf{v}, t)\) denote the distribution function, which quantifies the probability of detecting a particle at velocity \(\mathbf{v}\) at position \(\mathbf{x}\) at time \(t\). 
In this work, we based it on a prototype of a collision-free single-species VP system controlled electronically:
\begin{align}\label{eq:single_VP}
\begin{cases}
\partial_t f + \mathbf{v}\cdot\nabla_\mathbf{x} f + E(\mathbf{x}, t)\cdot\nabla_\mathbf{v} f = 0, \qquad (\mathbf{x},\mathbf{v})\in\Omega_\mathbf{x}\times\Omega_\mathbf{v} \subset \mathbb{R}^d \times \mathbb{R}^d,\\[2mm]
E(\mathbf{x}, t) = -\nabla_\mathbf{x}\phi(\mathbf{x}, t), \quad  -\varepsilon \Delta_\mathbf{x}\phi(\mathbf{x}, t) = \rho(\mathbf{x}, t) - \rho_0,
\end{cases}
\end{align}
where \(d \in \{1, 2, 3\}\) denotes dimension,
the spatial domain \(\Omega_\mathbf{x}\) is assumed to be periodic, and the velocity domain \(\Omega_\mathbf{v}\) is chosen sufficiently large such that the distribution function \(f\) decays to zero (or compactly supported) on the boundary \(\partial\Omega_\mathbf{v}\).
The self-consistent electrostatic potential \(\phi(\mathbf{x},t)\) determines the electric field \(E(\mathbf{x},t)\).
The charge density $\rho(\mathbf{x},t) = \int_{\Omega_\mathbf{v}} f(\mathbf{x},\mathbf{v},t)\,d\mathbf{v}$ and $\rho_0=\frac{1}{\Omega_\mathbf{x}}\int_{\Omega_\mathbf{x}}\int_{\Omega_\mathbf{v}} f(\mathbf{x},\mathbf{v},0)\,d\mathbf{v} d\mathbf{x}.$
The \(\varepsilon=\left(\frac{\lambda_D}{L}\right)^2\) is dimensionless parameter characterized by the normalized Debye length, our work discusses the case \(\varepsilon=1\). 

The VP system \eqref{eq:single_VP} has the following conservation laws,
these conservation laws constitute the fundamental bedrock for physical analysis and serve as critical benchmarks for verifying the fidelity of numerical schemes:
    \begin{align*}
    &\frac{d}{dt}\int_{\Omega_\mathbf{x}}\rho(\mathbf{x},t)\,d\mathbf{x} = \frac{d}{dt}\int_{\Omega_\mathbf{x}}\int_{\Omega_\mathbf{v}} f(\mathbf{x},\mathbf{v},t)\,d\mathbf{v}d\mathbf{x} = 0\quad\text{(Mass)},\\
    &\frac{d}{dt}\int_{\Omega_\mathbf{x}}\int_{\Omega_\mathbf{v}} \mathbf{v} f(\mathbf{x},\mathbf{v},t)\,d\mathbf{v}d\mathbf{x} = 0\quad\text{(Momentum)},\\
    &\frac{d}{dt}\left( \frac{1}{2}\int_{\Omega_\mathbf{x}}\int_{\Omega_\mathbf{v}} |\mathbf{v}|^2 f(\mathbf{x},\mathbf{v},t)\,d\mathbf{v}d\mathbf{x} + \frac{1}{2}\int_{\Omega_\mathbf{x}} |E(\mathbf{x},t)|^2\,d\mathbf{x} \right) = 0\quad\text{(Energy)}.
    \end{align*}

Uncertainties typically originate from empirical errors in measuring equilibrium density, velocity, and temperature profiles, fluctuations in external magnetic/electric fields, and stochastic perturbations in initial distribution modes. In order to depict these phenomena, we introduce a \(d_z\)-dimensional random parameter \(z\in\mathcal{I}_z = [-1,1]^{d_z}\) with a given probability distribution. \((\mathbf{x},\mathbf{v},z) \in \Omega_\mathbf{x} \times \Omega_\mathbf{v} \times \Iz \subset \mathbb{R}^d \times \mathbb{R}^d \times \mathbb{R}^{d_z}\), the VP system \eqref{eq:single_VP} then becomes the following stochastic system
\begin{subequations}
\begin{align}
\label{eq:VP_random}
  &\pt f + \mathbf{v}\cdot\nabla_\mathbf{x} f + E\cdot \nabla_\mathbf{v} f = 0,
  \tag{Vlasov}\\
  \label{eq:Poisson}
& E = -\nabla_\mathbf{x}\phi, \quad -\Delta_\mathbf{x} \phi = \rho - \rho_0, \quad
\rho(\mathbf{x},t,z) = \int_{\Omega_\mathbf{v}} f(\mathbf{x},\mathbf{v},t,z) \dd \mathbf{v},
  \tag{Poisson}
\end{align}
\end{subequations}
where the initial conditions for \(f\) are prescribed as random fields parameterized by \(z\), \(f(\mathbf{x}, \mathbf{v}, 0, z) = f_{\mathrm{in}}(\mathbf{x}, \mathbf{v}, z)\) and $\rho_0(\cdot, z)=\frac{1}{\Omega_\mathbf{x}}\int_{\Omega_\mathbf{x}}\int_{\Omega_\mathbf{v}} f(\mathbf{x}, \mathbf{v}, 0, z)\,d\mathbf{v} d\mathbf{x}.$
The random parameter \(z\) can represent, for example, uncertain amplitudes, phases, wavenumber, or background profiles.

\subsection{The macroscopic fluid equations}
Define the macroscopic moment vector by \( \mathbf{W} = (\rho, \rho \mathbf{u}, \mathcal{E})^\top
  = (\int_{\Omega_\mathbf{v}} f \dd \mathbf{v}, \int_{\Omega_\mathbf{v}} \mathbf{v}f \dd \mathbf{v}, \int_{\Omega_\mathbf{v}} \frac{|\mathbf{v}|^2}{2}f \dd \mathbf{v})^\top\),
and the mean velocity \( \mathbf{u} = \frac{1}{\rho} \int_{\Omega_\mathbf{v}} \mathbf{v}f \dd \mathbf{v}.\)
Multiplying the Vlasov equation by the moments vector
$\mb{m}(\mathbf{v}) = (1,\, \mathbf{v},\, \mathbf{v}^2/2)^\top$
and integrating over $\mathbf{v} \in \Omega_\mathbf{v}$, using integration by parts for the force term (assuming $f \to 0$ as
$|\mathbf{v}|\to\infty$), the mass, momentum, and kinetic-energy balances, one obtains the unclosed moment equations
\begin{equation}
  \label{eq:moment_unclosed}
  \pt \mathbf{W} + \nabla_\mathbf{x} \cdot \mathcal{F}(f)
  =
  \begin{pmatrix}0 \\ \rho E \\ \rho \mathbf{u} \cdot E\end{pmatrix},
  \qquad
  \mathcal{F}(f)
  = \begin{pmatrix}\int_{\Omega_\mathbf{v}} \mathbf{v}f \dd \mathbf{v} \\[2pt]
                   \int_{\Omega_\mathbf{v}} \mathbf{v} \otimes \mathbf{v} f \dd \mathbf{v} \\[2pt]
                   \int_{\Omega_\mathbf{v}} \frac{|\mathbf{v}|^2 \mathbf{v}}{2}f \dd \mathbf{v}\end{pmatrix}.
\end{equation}
These equations are exact consequences of the kinetic model but is {\it not a closed} macroscopic system. Indeed, the momentum equation involves the pressure tensor \((\mathbb P = \int_{\Omega_v} (\mathbf{v} - \mathbf{u}) \otimes (\mathbf{v} - \mathbf{u}) f \, d\mathbf{v})\), while the energy flux contains still higher-order velocity moments. A closure assumption is therefore required to express the macroscopic dynamics solely in terms of a finite number of moments.

Now, we introduce the cold-electron assumption \cite{Brenier01012000}: the eletrons velocity distribution is highly concentrated initially and remains concentrated under the evolution of collisionless Vlasov dynamics. 
Let \(\{f^\varepsilon(\mathbf{x},\mathbf{v}, t)\}_{\varepsilon>0}\) be a family of general solutions to the VP system.
In the limit \(\varepsilon \to 0\), the distribution \(f^\epsilon\) converges weakly to Dirac \(\delta\) function,
all components of the moment flux \(\mathcal{F}(f)\) can be precisely expressed in terms of the moments:
\begin{align*}
\int_{\Omega_\mathbf{v}} f \mathbf{v} \, d\mathbf{v} = \rho \mathbf{u},\quad
\int_{\Omega_\mathbf{v}} \mathbf{v} \otimes \mathbf{v} f \dd \mathbf{v}=\rho \mathbf{u} \otimes \mathbf{u},\quad
 \mathcal{E}:=\int_{\Omega_\mathbf{v}} \frac{|\mathbf{v}|^2}{2} f \, d\mathbf{v}  = \frac{1}{2} \rho |\mathbf{u}|^2, 
\end{align*}
meanwhile, the temperature is automatically satisfied as \(
  T = \frac{1}{d}\left(\frac{2\mathcal{E}}{\rho} - |\mathbf{u}|^2 \right)= 0,
\)
which yields the pressure \(\mathbb P = \rho T I_d\equiv 0\). 

Let \((\mathbf{x},z) \in \Omega_\mathbf{x} \times \Iz \subset \mathbb{R}^d \times \mathbb{R}^{d_z}\), substituting the above monokinetic closure into the unclosed moment equations, the energy is determined by \(\mathcal E=\tfrac12\rho|\mathbf u|^2\), it is no longer an independent unknown but is instead derived from the mass and momentum equations.
The fully closed pressureless EP system \cite{Brenier01012000, Han-Kwan01082011} with random parameters:
\begin{align}
  \pt \rho + \nabla_\mathbf{x}\cdot(\rho \mathbf{u}) &= 0, \label{eq:EP_mass} \tag{Mass}\\
  \pt(\rho \mathbf{u}) + \nabla_\mathbf{x}\cdot(\rho \mathbf{u} \otimes \mathbf{u} ) &= \rho E, \label{eq:EP_mom} \tag{Momentum}
\end{align}
coupled with the same Poisson equation \eqref{eq:Poisson} governing the self-consistent electric field.
This pressureless EP model retains the evolution of the leading moments and the electrostatic coupling of the VP system, while eliminating the velocity-space discretization and thereby substantially reducing the computational cost.
These provide the modeling foundation for the bi-fidelity construction and the subsequent error analysis.
\section{A structure-preserving SL bi-fidelity method}
It is worth noting that for uncertain problems, we take the VP model as the high-fidelity model, and the macroscopic quantity  \( \mathbf{U}^H(z) = \bigl(\rho^H,\; u^H\bigr)^\top\) obtained by moment calculation of \(f\) as the high-fidelity solution, and we take the pressureless EP model as the low-fidelity model, with its solution denoted as \(
  \mathbf{U}^L(z) = \bigl(\rho^L,\; u^L\bigr)^\top\).

For the convenience of introducing numerical methods and maintaining consistency in subsequent theoretical analysis, we will introduce the algorithm for one-dimensional space and one-dimensional velocity (1D1V).

\subsection{High-fidelity solver}
\label{high-fidelity}
The CSL method combined with high-order spatial discretization \cite{CAI2017, CAI20220301, CHO2021110281, CROUSEILLES20101927} can provide a high-resolution, structure-preserving solver for bi-fidelity surrogate frameworks to evaluate representative high-fidelity snapshots, ensuring the accurate capture of the underlying low-dimensional solution manifold, thereby improving algorithm accuracy and the physical consistency of reconstructed approximations in parameter space.
We introduce a conservative semi-Lagrangian discontinuous Galerkin (CSLDG) method that can eliminate the CFL restriction while maintaining positivity and strict mass conservation for solving deterministic VP model.

The VP equations \eqref{eq:single_VP} couple advection in $x$ (at speed $v$) with acceleration in $v$ (at electric field $ E$).
Let the time interval \([0,T]\), the time step \(\Dt = T/N_t\), and \(t^n=n\Dt, n=0 \cdots N_t\),
denote the numerical solution of \(f(x,v,t)\) at time \(t^n\) as \(f^n\).
A Strang splitting is second-order accurate,
which decomposes one full time step $\Dt$ into several consecutive sub-steps. One full time step advances the solution as
\begin{equation*}
 f^n \;\xrightarrow {\mathcal{A}_{x}^{\Dt/2}\left(f^n\right)} f^*
  \;\xrightarrow{\;\text{Poisson: } \partial_x E = \rho - \rho_0\;}
  E^* \;\xrightarrow{\mathcal{B}_{v}^{\Dt}(f^*,E^*)} f^{**}
  \;\xrightarrow{\mathcal{A}_{x}^{\Dt/2}\left(f^{**}\right)} f^{n+1}
\end{equation*}
where each operator solves:
\begin{itemize}[leftmargin=2em]
  \item $\mathcal{A}_{x}^{\Dt/2}$: This operator describes pure advection in the spatial direction\(x\) of phase space. The distribution function is transported with the velocity \(v\). After the spatial-advection substep, the charge density is updated and the electric field is obtained from the Poisson equation: 
  \[\pt f + v\px f = 0, \quad \partial_x E(x, t) = \rho - \rho_0.\quad \]
  \item $\mathcal{B}_{ v}^{\Dt}$: This operator describes advection in the velocity direction induced by the electric field. During this substep, the charge density is preserved, and hence the electric field determined by the Poisson equation remains constant. Therefore, \(E(x, t)\) acts as a fixed advection velocity in velocity space:
  \[\pt f + E(x, t)\pvv f = 0, \quad \pt E(x, t)=0.\]
\end{itemize}
On the periodic domain $[0,L]$, the Poisson equation
\eqref{eq:Poisson} is solved directly using Fourier pseudospectral methods (such as, the Fast Fourier Transform (FFT), the calculation details see \cite{SHEN2011}).
Define the charge-source $S = \rho - \rho_0$.
Its Fourier coefficients $\hat{S}_k = \mathcal{F}[S]_k$ satisfy
\(
  \hat{E}_k = \frac{-i}{\kappa_k}\,\hat{S}_k,  k \neq 0,
 \hat{E}_0 = 0,
\)
where $\kappa_k = 2\pi k/L$.
The electric field is $E(x, t) = \mathrm{Re}[\mathcal{F}^{-1}[\hat{E}](x, t)]$.

\subsubsection{CSLDG method}

We now detail the DG spatial discretization of the CSL framework.
The phase-space domain $\Omega = \Omega_x \times \Omega_v = [x_L, x_R] \times [v_L, v_R]$ is partitioned into $N_x \times N_v$ Cartesian cells $\mathcal{T}_h = \{ I_i \times J_j \}$, where
\begin{align*}
    I_i &= [x_{i-1/2}, x_{i+1/2}], \quad x_{i+1/2} = x_L + i\Delta x, \quad \Delta x = \frac{x_R - x_L}{N_x}, \quad i = 1, \dots, N_x, \\
    J_j &= [v_{j-1/2}, v_{j+1/2}], \quad v_{j+1/2} = v_L + j\Delta v, \quad \Delta v = \frac{v_R - v_L}{N_v}, \quad j = 1, \dots, N_v.
\end{align*}
The cell centers are $\bar{x}_i = (x_{i-1/2}+x_{i+1/2})/2$ and $\bar{v}_j = (v_{j-1/2}+v_{j+1/2})/2$.
We define the discontinuous finite element space
\(
    V_h^p = \left\{ w(x,v) : \left. w \right|_{I_i \times J_j} \in P^p(I_i) \otimes P^p(J_j), \; \forall i, j \right\},
\)
with $P^p$ the space of polynomials of degree at most $p$ in each variable. On each element $I_i \times J_j$, the distribution function is approximated as
\begin{equation}
    \left. f(x,v,t) \right|_{I_i \times J_j} \approx \sum_{k=0}^p \sum_{l=0}^p U_{ij}^{kl}(t) \phi_k(\xi_x) \phi_l(\xi_v),
    \label{eq:expansion_2d}
\end{equation}
where the local reference coordinates are
\(
    \xi_x = \frac{x - \bar{x}_i}{\Delta x} \in \left[ -\frac{1}{2}, \frac{1}{2} \right], \xi_v = \frac{v - \bar{v}_j}{\Delta v} \in \left[ -\frac{1}{2}, \frac{1}{2} \right].
\)
The scaled Legendre polynomials $\{\phi_k\}_{k=0}^p$ on $[-1/2,1/2]$ satisfy
\(
    \int_{-1/2}^{1/2} \phi_k(\xi) \phi_l(\xi) \, d\xi = \frac{1}{2k+1} \delta_{kl} = \frac{1}{a_k} \delta_{kl}, a_k = 2k+1.
\)
The initial condition is projected via $L^2$-projection onto $V_h^p$. For each cell, the initial coefficients are
\begin{equation}
    U_{ij}^{kl}(0) = a_k a_l \int_{-1/2}^{1/2} \int_{-1/2}^{1/2} f^0\left(\bar{x}_i + \xi_x \Delta x, \, \bar{v}_j + \xi_v \Delta v\right) \phi_k(\xi_x) \phi_l(\xi_v) \, d\xi_x d\xi_v,
    \label{eq:initial_projection}
\end{equation}
evaluated with a tensor-product $(p+2)$-point Gauss–Legendre quadrature for exact integration of polynomials up to degree $p$.

\textbf{$x$-Direction CSLDG Update $\mathcal{A}_{x,h}^{\Dt/2}$:}
In the operator splitting framework, the $x$-advection sub-step decouples across each velocity ordinate $v$. For a fixed velocity node, the governing equation reduces to the 1D linear advection equation:
\begin{equation}
    \partial_t f + v \partial_x f = 0, \quad x \in \Omega_x, \quad t \in [t^n, t^*],
    \label{eq:advection_1d}
\end{equation}
with $t^* = t^n + \Delta t^* = t^n + \frac{\Delta t}{2}$.
To obtain a mass-conservative weak form, we introduce a time-dependent test function $\psi(x,t)$ satisfying the adjoint problem:
\begin{equation}
    \begin{cases}
        \partial_t \psi + v \partial_x \psi = 0, & t \in [t^n, t^*], \\
        \psi(x, t^*) = \Psi(x) \in P^p(I_i).
    \end{cases}
    \label{eq:adjoint_problem}
\end{equation}
The characteristics of Eq.~\eqref{eq:advection_1d} are given by $dx/dt = v$. Tracing a point $x \in I_i$ at $t^*$ backward to $t^n$ yields $x(t^n) = x - v \Delta t^*$. Thus $\psi$ is invariant along characteristics:
\begin{equation*}
    \psi(x, t) = \Psi(x + v(t^* - t)) \implies \psi(x, t^n) = \Psi(x + v \Delta t^*).
\end{equation*}
Let $I_i(t) = [x_{i-1/2}(t), x_{i+1/2}(t)]$ be the moving Lagrangian interval with endpoints moving at speed $v$, and $I_i(t^*) = I_i$, the upstream interval at $t^n$ is
\begin{equation*}
    I_i^* \equiv I_i(t^n) = [x_{i-1/2}^*, x_{i+1/2}^*] = [x_{i-1/2} - v\Delta t^*, \; x_{i+1/2} - v\Delta t^*].
\end{equation*}
Applying Reynolds transport theorem to $f\psi$ over $I_i(t)$ and using Eqs.~\eqref{eq:advection_1d} and \eqref{eq:adjoint_problem}, we get
\(
    \frac{d}{dt} \int_{I_i(t)} f(x,t)\psi(x,t) \, dx = 0.
    \)
Integrating over $[t^n, t^*]$ yields the fundamental CSLDG variational equality:
\begin{equation*}
    \int_{I_i} f(x,t^*) \Psi(x) \, dx = \int_{I_i^*} f(x,t^n) \psi(x,t^n) \, dx, \quad \forall \Psi \in P^p(I_i).
    \label{eq:csl_weak_form}
\end{equation*}
Choosing $\Psi(x) = \phi_k((x-\bar{x}_i)/\Delta x)$, the updated modal coefficients are
\begin{equation}
    U_i^{k,*} = \frac{a_k}{\Delta x} \int_{I_i^*} f^n(x) \phi_k\left( \frac{x + v \Delta t^* - \bar{x}_i}{\Delta x} \right) dx, \quad k = 0, \dots, p.
    \label{eq:csl_moments}
\end{equation}
For $k=0$, this recovers \textbf{mass conservation}: $\int_{I_i} f^*(x) dx = \int_{I_i^*} f^n(x) dx$.

For arbitrary CFL number $\nu = |v|\Delta t^*/\Delta x > 1$, the upstream interval $I_i^*$ may overlap multiple Eulerian cells. We decompose $I_i^*$ along the background grid:
\(
    I_i^* = \bigcup_{\ell=1}^{M_i} I_{i,\ell}^*,
\)
where each $I_{i,\ell}^* = I_i^* \cap I_{j_\ell}$ lies strictly inside a single Eulerian cell $I_{j_\ell}$. Then Eq.~\eqref{eq:csl_moments} becomes the piecewise sum
\begin{equation}
    U_i^{k,*} = \frac{a_k}{\Delta x} \sum_{\ell=1}^{M_i} \int_{I_{i,\ell}^*} f^n(x) \phi_k\left( \frac{x + v \Delta t^* - \bar{x}_i}{\Delta x} \right) dx.
    \label{eq:subdivided_integral}
\end{equation}
Summing the quadrature contributions over $\ell$ gives the updated coefficients $U_i^{k,*}$ for each target cell $i$ and velocity $v$, completing the $x$-half-step. The $v$-direction update and the remaining time step are performed analogously.

\begin{remark}
To evaluate \eqref{eq:subdivided_integral} in practice, we first trace the upstream endpoints and determine the index span $[j_{\min}, j_{\max}]$ via
{\small\[
    j_{\min} = \left\lfloor \frac{x_{\min}^* - x_L}{\Delta x} \right\rfloor + 1, \quad j_{\max} = \left\lfloor \frac{x_{\max}^* - x_L}{\Delta x} \right\rfloor + 1,
\]}
where $x_{\min}^* = \min(x_{i-1/2}^*, x_{i+1/2}^*)$, $x_{\max}^* = \max(x_{i-1/2}^*, x_{i+1/2}^*)$, and \(\left\lfloor \cdot \right\rfloor \) represents rounding down.
This yields all sub-intervals $I_{i,\ell}^* = [A_\ell, B_\ell]$.
On each sub-interval, $f^n$ is represented using the DG coefficients of the host cell $I_{j_\ell}$:
{\small\[
    \left. f^n(x) \right|_{I_{i,\ell}^*} = \sum_{m=0}^p U_{j_\ell}^{m,n} \phi_m\left(\frac{x - \bar{x}_{j_\ell}}{\Delta x}\right).
\]}
Then Gauss–Legendre quadrature with $Q \ge \lceil (2p+1)/2 \rceil$ points is applied. Mapping $[A_\ell, B_\ell]$ to $\xi \in [-1,1]$ via $x(\xi) = (A_\ell+B_\ell)/2 + \xi (B_\ell-A_\ell)/2$, each sub-integral is approximated by
{\small\begin{equation*}
    \int_{I_{i,\ell}^*} f^n(x) \psi(x,t^n) \, dx \approx \frac{B_\ell - A_\ell}{2} \sum_{q=1}^Q w_q \, f^n(x(\xi_q)) \, \phi_k\left( \frac{x(\xi_q) + v\Delta t^* - \bar{x}_i}{\Delta x} \right).
\end{equation*}}
\end{remark}

\textbf{\(v\)-Direction CSLDG Update \(\mathcal{B}_{v,h}^{\Delta t}\):}
The \(v\)-sweep solves \(\partial_t f + E(x,t)\partial_v f = 0\) with the field frozen at its most recent value (after the \(x\) half-step and Poisson solve), approximated on each spatial cell \(I_i\) by its local mean \(\bar E_i\).

It is now the spatial cell \(i\) and \(x\)-mode \(k\) that are held fixed while \(l=0,\dots,p\) is updated. The backward characteristic gives \(v^* = v - \bar E_i \Delta t\), i.e.
\[
f^{**}(x,v) = f^{*}\bigl(x,\, v - \bar E_i \Delta t\bigr), \quad x \in I_i.
\]
Then, similar to the $x$-direction, apply the 1D CSLDG update:
\begin{equation}
    U_{ij}^{kl,**} = \frac{a_l}{\Delta v} \int_{\widetilde{J}_j}
    f_i^{*,k}(w)\,
    \phi_l\!\left(\frac{w + \bar{E}_i \Delta t - \bar{v}_j}{\Delta v}\right) \mathrm{d}w, \qquad k,l = 0,\dots,p,
    \label{eq:v_proj}
\end{equation}
where \(f_i^{*,k}(w) := \sum_{m=0}^p U_{im}^{km,*}\phi_m(\xi_v), w \in J_m,\) and \(\widetilde{J}_j = [v_{j-1/2} - \bar{E}_i\Delta t,\; v_{j+1/2} - \bar{E}_i\Delta t]\) is the upstream velocity interval. When \(|\bar E_i|\Delta t/\Delta v > 1\), \(\widetilde J_j\) is split along the background velocity grid exactly as in \eqref{eq:subdivided_integral}, then gives
\begin{equation}
    U_{ij}^{kl,**} = \frac{a_l}{\Delta v}\sum_{r=1}^{m} \frac{B_r-A_r}{2}\sum_{q=1}^Q w_q\, f_i^{*,k}(w(\xi_q))\, \phi_l\!\left(\frac{w(\xi_q) + \bar{E}_i \Delta t - \bar{v}_j}{\Delta v}\right),
    \label{eq:v_subdivided}
\end{equation}
where \(\widetilde{J}_j=\bigcup_{r=1}^m R_r\), \(R_r=[A_r,B_r] \subset J_{j_r}\), as in the \(x\)-sweep, \(l=0\) recovers local mass conservation in \(v\).

\subsubsection{Positivity-preserving limiter}
For the \eqref{eq:single_VP}, a positive initial datum guarantees that the corresponding solution remains nonnegative for all later times; this fundamental property is referred to as nonnegativity preservation. In the context of our CSLDG discretization, one can establish that the cell averages at the new time level $t^{n+1}$ are nonnegative provided the numerical solution $f^n$ at $t^n$ is nonnegative. 
To rigorously enforce this invariant in the high‐order SLDG framework, we incorporate a positivity‐preserving (PP) limiting strategy, following the ideas developed in \cite{Cai2018, QIU20118386}. Given the numerical solution $f(x,v,t^n)$, we construct modified polynomial $\tilde{f}(x,v)$ via
\[
\tilde{f}(x,v) = \theta\bigl(f(x,v,t^n) - \bar{f}\bigr) + \bar{f}, \qquad
\theta = \min \left\{ \left| \frac{m_0-\bar{f}}{m' - \bar{f}} \right|, 1 \right\},
\]
where $\bar{f}$ denotes the cell average of
\(\bar{f}=\frac{1}{\Delta x_i \Delta v_j}\int_{I_i \times I_j}f(x,v) dxdv,\) \(m_0 = 10^{-15}\) is a small positive threshold chosen to avoid floating-point underflow, and \(
m' := \min_{(x,v)\in I_i \times J_j} f(x,v)
\).

\begin{remark}
    Under periodic boundary conditions in \(x\), appropriate boundary conditions in \(v\), and the zero-mean condition for the electric field, the split operators preserve the total mass and the total momentum
{\small\begin{align*}
    M(\mathcal{S}_{\Delta t} f) = M\left(\mathcal{A}_x^{\Delta t/2} \mathcal{B}_v^{\Delta t} \mathcal{A}_x^{\Delta t/2} f\right) = M(f),\quad
M(f) = \int_{\Omega_x} \int_{\Omega_v} f(x, v) \, dv \, dx.\\
P(\mathcal{S}_{\Delta t} f) = P\left(\mathcal{A}_x^{\Delta t/2} \mathcal{B}_v^{\Delta t} \mathcal{A}_x^{\Delta t/2} f\right) = P(f),\quad P(f) = \int_{\Omega_x} \int_{\Omega_v} v f(x, v) \, dv \, dx.
\end{align*}}
At the discrete level, Let \(f_h^{n+1}\) represent the fully discrete spatiotemporal numerical solution of \(f\).
Both transport sub-steps are approximated using the above-mentioned CSLDG method, local mass conservation is satisfied for each cell. The introduction of a PP limiter does not compromise the local conservation property, and thus the fully discrete Strang splitting scheme can ensure the exact mass conservation
{\small\[
M_h(f_h^{n+1}) = M_h\left(\mathcal{A}_{x,h}^{\Delta t/2} \mathcal{B}_{v,h}^{\Delta t} \mathcal{A}_{x,h}^{\Delta t/2} f_h^n\right)=M_h(f_h^n), \quad M_h(f_h^n)=\sum_{i,j} f_{i,j}^n \Delta x \Delta v.
\]}
However, exact discrete momentum conservation can't be achieved directly.
For the \(\mathcal{A}_{x,h}^{\Delta t/2}\) substep, for every fixed \(v_j\), then immediately
{\small\[
P_h(f_h^*)
= \sum_j v_j \left( \sum_i f_{i,j}^* \Delta x \right) \Delta v
= P_h(f_h^n), \quad P_h(f_h^n) = \sum_{i,j} v_j f_{i,j}^n \Delta x \Delta v,
\]}
for the \(\mathcal{B}_{v,h}^{\Delta t}\) substep to obtain exact discrete momentum conservation, the conservative remapping in the velocity direction should also satisfy the first-moment relation.
Although such precise discrete momentum constraints are not imposed here, the momentum errors shown in the numerical results later are still very small, demonstrating excellent numerical conservation behavior.
\end{remark}

\subsection{Low-fidelity solver}
\label{LF}

The low-fidelity model is the pressureless EP system from the Section 2.
The compact conservation form of the system is:
\begin{equation}\label{LF_EP}
    \frac{\partial \mathbf{W}}{\partial t} + \frac{\partial \mathbf{F}(\mathbf{W})}{\partial x} = \mathbf{S}(\mathbf{W}, E),
\end{equation}
where
\begin{equation*}
    \mathbf{W} = \begin{pmatrix} \rho \\ \rho u \end{pmatrix}, \qquad
    \mathbf{F}(\mathbf{W}) = \begin{pmatrix} \rho u \\ \rho u^2 \end{pmatrix}, \qquad
    \mathbf{S}(\mathbf{W}, E) = \begin{pmatrix} 0 \\ \rho E \end{pmatrix}.
\end{equation*}
The computational domain is treated as periodic in \(x\), to discretise the system in space, we use a finite-volume method on a uniform grid with cells \(I_i = [x_{i-1/2}, x_{i+1/2}]\) of size \(\Delta x\).  
The time discretization uses forward Euler.
Let \(\bar{\mathbf{W}}_i(t) = \frac{1}{\Delta x}\int_{I_i} \mathbf{W}(x,t)\,\mathrm{d}x\) be the cell average. Integrating the conservation law over \(I_i\) gives:
\begin{equation}\label{LF_space_time}
    \bar{\mathbf{W}}_i^{n+1}
    = \bar{\mathbf{W}}_i^n
    - \frac{\Delta t}{\Delta x}\Bigl(\hat{\mathbf{F}}_{i+1/2}^n - \hat{\mathbf{F}}_{i-1/2}^n\Bigr)
    + \Delta t\,\bar{\mathbf{S}}_i^n,
\end{equation}
where \(\hat{\mathbf{F}}^n_{i+1/2}\) is the numerical flux at the interface \(x_{i+1/2}\), and \(\bar{\mathbf{S}}^n_i\) is the cell-averaged source term.
We employ the local Lax--Friedrichs flux \cite{LeVeque2002,LI2025124},
the flux is defined as:
\begin{equation}
    \hat{\mathbf{F}}^n_{i+1/2} = \frac{1}{2}\bigl[\mathbf{F}(\mathbf{W}^n_L) + \mathbf{F}(\mathbf{W}^n_R)\bigr] - \frac{\alpha_{i+1/2}}{2}\bigl(\mathbf{W}^n_R - \mathbf{W}^n_L\bigr),
\end{equation}
where the local maximum signal speed is:
\(
    \alpha_{i+1/2} = \max\!\left(|u_L|,\; |u_R|\right).
\)
Here, \(\mathbf{W}^n_L\) and \(\mathbf{W}^n_R\) are the reconstructed left and right states at the interface, we reconstruct these states from the cell averages using the MUSCL scheme introduced by van~Leer~\cite{VANLEER1979}.
The reconstruction is performed in primitive variables
\(\mathbf{q}^n=(\rho^n,u^n)^\top\):
\begin{align*}
\mathbf{q}^n_{L,i+1/2} &= \mathbf{q}^n_i + \frac12\;\mathrm{minmod}\!\bigl(\mathbf{q}^n_i - \mathbf{q}^n_{i-1},\; \mathbf{q}^n_{i+1} - \mathbf{q}^n_i\bigr),\\
\mathbf{q}^n_{R,i+1/2} 
            &= \mathbf{q}^n_{i+1} - \frac12\;\mathrm{minmod}\!\bigl(\mathbf{q}^n_{i+1} - \mathbf{q}^n_i,\; \mathbf{q}^n_{i+2} - \mathbf{q}^n_{i+1}\bigr),
\end{align*}
where the minmod function is defined as \(\mathrm{minmod}(a,b)=\frac12\bigl(\operatorname{sgn}(a)+\operatorname{sgn}(b)\bigr)\min(|a|,|b|)\).
Since the MUSCL reconstruction at cell \(i\) requires the stencil \(\{i-1, i, i+1, i+2\}\), two layers of ghost cells are appended at each end of the domain and filled by periodic wrap-around.
The electric field \(E_i^n\) is obtained from the charge density \(\rho^n_i\) by solving the Poisson equation spectrally (as described in high-fidelity solver).

\subsection{Bi-fidelity algorithm}
In order to obtain the structure-preserving SL bi-fidelity algorithm, we choose the CSLDG method with second-order Strang splitting as the high-fidelity solver to discretize the VP model and obtain the macroscopic moment \(\mathbf{U}^H\), and select the second-order MUSCL method as the low-fidelity solver to approximate the EP model and obtain the macroscopic moment \(\mathbf{U}^L\).
The bi-fidelity algorithm proceeds in two decoupled stages, here, we provide a brief review, for the specific implementation details of the bi-fidelity algorithm, please refer to \cite{Liu2020, Narayan2014, zhu201421}.

The effectiveness of the algorithm depends on the assumption that the parameterized solution manifold admits an accurate low-dimensional approximation and that the low- and high-fidelity solution spaces exhibit sufficiently correlated parametric structures. 
In a typical bi-fidelity construction, an inexpensive low-fidelity model is first evaluated over a large set of parameter samples to explore the stochastic parameter space \(\mathcal{I}_z\). A small number of representative samples are then selected, and high-fidelity simulations are performed only at these selected parameter locations. 

For a new parameter \(z\in \mathcal{I}_z\) realization, the corresponding low-fidelity solution is projected onto the reduced low-fidelity space, and the resulting expansion coefficients are used to combine the associated high-fidelity snapshots. In this manner, the bi-fidelity approximation seeks to retain the accuracy of the kinetic model while requiring only a limited number of expensive high-fidelity evaluations.
The bi-fidelity approximation $\mathbf{U}^B(z)$ is reconstructed via:
\begin{equation}
\label{eq:BF_approx}
    \mathbf{U}^B(z) = \Phi^H \mathbf{c}(z) = \Phi^H [(\Phi^L)^T \Phi^L]^{-1} (\Phi^L)^T \mathbf{U}^L(z).
\end{equation}

\begin{algorithm}[H]
\caption{Bi-fidelity Algorithm}
\begin{algorithmic}[1]
\Require low-fidelity solver, high-fidelity solver, parameter space \(\mathcal{I}_z\), training set \(\Gamma= \{z_1, \dots, z_M\}\), desired number of nodes \(N\), new query parameter \(z\).
\State \textbf{Offline phase}
\State \textbf{Step 1:} Compute low-fidelity snapshots \(Q^L = [\mathbf{U}^L(z_1), \dots, \mathbf{U}^L(z_M)] \in \mathbb{R}^{2N_l \times M}\).
\State \textbf{Step 2:} Choose \(z_{i_1}\) with the largest variance in \(Q^L\), set \(\gamma_N = \{z_{i_1}, \dots, z_{i_N}\}\). 
\State \textbf{Step 3:} Build low-fidelity matrix \(\Phi^L = [\mathbf{U}^L(z_{i_1}), \dots, \mathbf{U}^L(z_{i_N})] \in \mathbb{R}^{2N_l \times N}\).
\State \textbf{Step 4:} Run high-fidelity solver on \(\gamma_N\): \(\Phi^H = [\mathbf{U}^H(z_{i_1}), \dots, \mathbf{U}^H(z_{i_N})] \in \mathbb{R}^{2N_h \times N}\).
\State \textbf{Step 5:} Precompute the Gram matrix \(\mathbf{G}^L = (\Phi^L)^T \Phi^L\).
\State \textbf{Online phase}
\State \textbf{Step 1:} Run low-fidelity solver obtain

\(\mathbf{U}^L(z) \approx \mathcal{P}_{\mathbf{U}^L(\gamma_N)}[\mathbf{U}^L(z)]=\sum_{k=1}^N c_k(z)\,\mathbf{U}^L(z_{k})=\Phil\,\mb{c}(z)\), where \( \mathcal{P}_{\mathbf{U}^L(\gamma_N)} \) denotes the projection onto the subspace \( \mathbf{U}^L(\gamma_N) \).
\State \textbf{Step 2:} Solve the coefficient vector \(\mathbf{c}(z)\) by solving the equation \(\mathbf{G}^L \mathbf{c}(z) = \mathbf{b}^L(z)\), where the projection vector is: \(\mathbf{b}^L(z) = (\Phi^L)^T \mathbf{U}^L(z)\).
\State \textbf{Step 3:} Reconstruct the high-fidelity estimate by linear combination of high-fidelity snapshots
\Return \(\mathbf{U}^B(z)\)
\end{algorithmic}
\end{algorithm}
\section{Theoretical analysis}
    In this section, we will establish a detailed error estimate for the CSL snapshots bi-fidelity algorithm~3.1. 
From \eqref{eq:BF_approx}, it can be seen that \(\mathbf{U}^B\) represents the approximate solution of the bi-fidelity algorithm. 
In order to obtain a uniform error estimate of \(\mathbf{U}^H-\mathbf{U}^B\), 
we might as well draw on the relevant research experience of predecessors \cite{Jinerror2021, Lin2023Error, LIN202547},
inserting the low-fidelity solution \(\mathbf{U}^L\) to split the total error:
\begin{align}\label{err_bf}
\mathbf{U}^H(z) - \mathbf{U}^B(z) 
&= \underbrace{[\mathbf{U}^H(z) - \mathbf{U}^L(z)]}_{=: \delta_{HL}(z)} 
+ \underbrace{\left[\mathbf{U}^L(z) - \sum_{k=1}^K c_k^L(z) \mathbf{U}^L(z_{i_k})\right]}_{=: e_{\text{proj}}(z)} \notag \\
&\quad + \underbrace{\sum_{k=1}^K c_k^L(z)\left[\mathbf{U}^L(z_{i_k}) - \mathbf{U}^H(z_{i_k})\right]}_{=: e_{\text{coef}}(z)}.
\end{align}
The decomposed errors consist of three components \(\delta_{HL}\), \(e_{\text{proj}}\), and \(e_{\text{coef}}\), which represent the error between the high-fidelity and low-fidelity solutions, projection error of the low-fidelity solution, and coefficient propagation error respectively. Next, we will analyze them one by one.

\subsection{Notations and prerequisites}
To commence, we establish the functional space and the associated norms employed throughout our study. 
The Hilbert space in question is characterized as follows:
\[
H\left(\mathbb{R}^{d_z}; \pi dz\right) = \left\{ f \mid I_z \to \mathbb{R}, \int_{I_z} f^2(z) \pi(z) dz < \infty \right\},
\]
which is additionally endowed with the inner product \(\langle f, g \rangle_\pi = \int_{I_z} f g \pi(z) dz\).
Then the standard \(L^2\)-norm is defined by
\[
\|f\|_{L^2(\Omega)} = \left( \int_{\Omega_x} \int_{\Omega_v} |f(x, v)|^2 \mathrm{d}v \mathrm{d}x \right)^{\frac{1}{2}}.
\]
For all \(z \in I_z\), the initial data \(f_0(\cdot, \cdot, z)\) belongs to \(C_{\text{per},c}^{\max\{p+1,3\}}\), and this norm is uniformly bounded with respect to \(z\). Moreover, \(f_0\) is nonnegative and compactly supported in velocity on \([-v_{max}, v_{max}]\).

Next, we present several important lemmas as reference points, in order to better analyze the error bounds of the bi-fidelity algorithm. More specifically, the errors arising from the macro/micro models are reviewed in Lemma 4.1, and the error bounds corresponding to the CSLDG method are reviewed in Lemma 4.2, under the assumption that the aforementioned conditions are satisfied; detailed derivations/proofs of these important lemmas are provided in the Supplementary Materials (SM).
To simplify the notation, the following lemmas are presented in the form of deterministic problems.

\begin{lemma}\cite{Brenier01012000}
Assume the initial distribution of the VP system \(f^\varepsilon(0, x, v)\) and the initial electric field satisfy the initial-modulated energy \(H_\varepsilon(0) \le C_0\),
for any \(t \in [0, T]\), the joint deviation between the macroscopic state \((\rho^\varepsilon, u^\varepsilon)\) and \((\rho_e, u_e)\) is uniformly controlled by the initial-modulated energy:
\[
\mathcal{D}^2_{macro} \left( (\rho^\varepsilon(t), u^\varepsilon(t)), (\rho_e(t), u_e(t)) \right) \le 2H_\varepsilon(0) \exp \left( 2 \int_{0}^t \left\| d(u_e)(\tau) \right\|_{L^\infty(\mathbb{R}^d)} d\tau \right)\le C',
\]
where the modulated total energy of the microscopic solution \((f^\varepsilon, E^\varepsilon)\) with respect to the macroscopic strong solution \((u_e, E_e)\) is shown in SM1.
\end{lemma}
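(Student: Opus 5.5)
This lemma is Brenier's modulated-energy (relative entropy) stability estimate for the quasineutral/cold limit. I will prove it by a Gronwall argument on the modulated energy. First I fix the modulated total energy
\[
H_\varepsilon(t)=\frac12\int_{\Omega_\mathbf{x}}\int_{\Omega_\mathbf{v}} |\mathbf{v}-u_e(\mathbf{x},t)|^2 f^\varepsilon\,d\mathbf{v}\,d\mathbf{x}+\frac{\varepsilon}{2}\int_{\Omega_\mathbf{x}} |E^\varepsilon-E_e|^2\,d\mathbf{x},
\]
with $(u_e,E_e)$ a smooth strong solution of the limiting pressureless (Euler) system on $[0,T]$. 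The macroscopic distance I take as
\[
\mathcal{D}^2_{macro}=\int\rho^\varepsilon|u^\varepsilon-u_e|^2\,d\mathbf{x}+\varepsilon\|E^\varepsilon-E_e\|_{L^2}^2 .
\]
The first step is the elementary comparison $\mathcal{D}^2_{macro}\le 2H_\varepsilon(t)$. For the kinetic part, Jensen's inequality (convexity of $|\cdot|^2$ in $\mathbf{v}$ applied to $f^\varepsilon/\rho^\varepsilon$) gives
\[
\rho^\varepsilon|u^\varepsilon-u_e|^2\le\int|\mathbf{v}-u_e|^2 f^\varepsilon\,d\mathbf{v}.
\]
The field part matches directly. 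It therefore suffices to show $H_\varepsilon(t)\le H_\varepsilon(0)\exp\bigl(2\int_0^t\|d(u_e)\|_{L^\infty}\,d\tau\bigr)$.

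The second step is to differentiate $H_\varepsilon$ in time. I expand $|\mathbf{v}-u_e|^2=|\mathbf{v}|^2-2\mathbf{v}\cdot u_e+|u_e|^2$. The pure kinetic-plus-field energy $\frac12\int|\mathbf{v}|^2f^\varepsilon+\frac{\varepsilon}{2}\int|E^\varepsilon|^2$ is conserved by the VP system, by the energy identity recalled in Section 2. The remaining terms are handled with the moment equations \eqref{eq:moment_unclosed}:
\begin{itemize}
\item $\partial_t\rho^\varepsilon+\nabla\cdot(\rho^\varepsilon u^\varepsilon)=0$;
\item $\partial_t(\rho^\varepsilon u^\varepsilon)+\nabla\cdot\int\mathbf{v}\otimes\mathbf{v}f^\varepsilon=\rho^\varepsilon E^\varepsilon$;
\item the Euler equation for $u_e$, namely $\partial_tu_e+(u_e\cdot\nabla)u_e=E_e$ (up to the sign convention for the field);
\item the Poisson equations for $E^\varepsilon$ and $E_e$.
\end{itemize}
Integrating by parts in $\mathbf{x}$ and collecting terms, I expect the identity
\[
\frac{d}{dt}H_\varepsilon=-\int\!\!\int (\mathbf{v}-u_e)\otimes(\mathbf{v}-u_e):\nabla_\mathbf{x} u_e\,f^\varepsilon\,d\mathbf{v}\,d\mathbf{x}+R_\varepsilon(t).
\]
Here $R_\varepsilon$ gathers the electric cross terms $\int(\rho^\varepsilon u^\varepsilon-\rho^\varepsilon u_e)\cdot(E^\varepsilon-E_e)$ and the time derivative of $\varepsilon\int E^\varepsilon\cdot E_e$.

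The main obstacle is showing that these electric cross terms cancel or are absorbed. The plan is to use Poisson's equation to write $\rho^\varepsilon-\rho_e=-\varepsilon\nabla\cdot(E^\varepsilon-E_e)$ (up to sign and background). I then use the current form of Ampère's law, $\varepsilon\partial_tE=-j+\text{(gradient-free correction)}$, which is obtained by taking the divergence-free projection of the charge continuity equation. This should reduce $R_\varepsilon$ to terms of the form $\int\nabla u_e:(E^\varepsilon-E_e)\otimes(E^\varepsilon-E_e)$ minus its trace part, i.e.\ a quadratic form in $E^\varepsilon-E_e$ weighted by $\nabla u_e$, plus possibly a term dependent only on the limit solution that vanishes or is absorbed into $C_0$. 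This is the delicate algebraic step, and I would follow Brenier's computation line by line there.

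Once this structure is established, both remaining terms are bounded by $2\|d(u_e)\|_{L^\infty}H_\varepsilon$, where $d(u_e)$ is the symmetric part of $\nabla u_e$; only the symmetric part survives contraction against symmetric tensors. Gronwall's lemma then gives the exponential bound. The final constant $C'$ follows from $H_\varepsilon(0)\le C_0$ and $\int_0^T\|d(u_e)\|_{L^\infty}<\infty$, which holds for the smooth strong solution on $[0,T]$.
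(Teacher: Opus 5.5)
Your proposal is correct at the paper's level of rigor and follows the same route as the paper's proof in SM1. Both arguments expand $H_\varepsilon$ and use conservation of the VP energy together with the moment, pressureless-Euler and Poisson equations to isolate the strain term and an electric cross term. Both then bound these by $2\|d(u_e)\|_{L^\infty}H_\varepsilon$, apply Gr\"onwall, and conclude with the Cauchy--Schwarz (Jensen) comparison $\mathcal{D}^2_{macro}\le 2H_\varepsilon$; your field term $\|E^\varepsilon-E_e\|_{L^2}^2$ is exactly the paper's $\|\rho^\varepsilon-\rho_e\|_{H^{-1}}^2$.

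On the step you defer: since both systems share the same Poisson coupling, the cross terms combine exactly into $-\int(\rho^\varepsilon-\rho_e)\,u_e\cdot(E^\varepsilon-E_e)\,d\mathbf{x}$, with no limit-only remainder. Such a remainder could not have been absorbed into $C_0$ anyway, because it would enter Gr\"onwall additively. The curl-free integration by parts then turns the cross term into $\int\bigl[d(u_e)-\tfrac12(\nabla_\mathbf{x}\cdot u_e)I\bigr]:(E^\varepsilon-E_e)\otimes(E^\varepsilon-E_e)\,d\mathbf{x}$. With $\mathcal{E}_\varepsilon=\tfrac12\|E^\varepsilon-E_e\|_{L^2}^2$, this is bounded by $2\|d(u_e)\|_{L^\infty}\mathcal{E}_\varepsilon$ for $d\le 2$, but only by $3\|d(u_e)\|_{L^\infty}\mathcal{E}_\varepsilon$ in $d=3$; the paper's proof glosses over this point as well.
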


\begin{lemma}\cite{Einkemmer2014}
The initial value \(f_0 \in C^{\max\{p+1,3\}}\) is nonnegative and compactly supported in velocity. 
For the numerical solution of \eqref{eq:VP_random} employing high-fidelity solver \(P^p\)-CSLDG, there exists a constant \(C > 0\), depending only on \(T\) and independent of \(\tau, h, n\), such that for all \(0 \le n\tau \le T\), the global error satisfies the following bound:
\begin{equation}
\sup_{0 \le n \le N} \left\| \left( \prod_{k=0}^{n-1} \widetilde{S}_k \right) \widetilde{f}_0(\cdot, \cdot) - f(n\tau, \cdot, \cdot) \right\|_{L^2_{x,v}} \le C \left( \tau^2 + h^{p+1} + \frac{h^{p+1}}{\tau} \right),
\end{equation}
where \(p\) represents the order of the polynomial, \(\tau=\Delta t\), \(h=\Delta x\), and \(N_t \tau = T\).
\end{lemma}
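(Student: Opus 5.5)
The plan is to follow the Lady Windermere's fan argument of \cite{Einkemmer2014}. I would compare the discrete propagator $\widetilde S_k$ (one CSLDG Strang step with frozen field) with two reference propagators: the exact flow $\mathcal{E}_\tau$ of \eqref{eq:VP_random} and the exact Strang splitting operator $S_k$ built from the exact subflows $\mathcal{A}_x^{\tau/2}$ and $\mathcal{B}_v^{\tau}$.

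\textbf{Key structural identity.} For a constant advection speed, the weak form \eqref{eq:csl_moments} together with the adjoint characteristic identity shows that one CSLDG sweep equals an exact translation followed by the $L^2$-projection $\Pi_h$ onto $V_h^p$. This holds when the subintegrals are evaluated exactly, which the Gauss rule with $Q\ge\lceil(2p+1)/2\rceil$ points guarantees, since the integrand is polynomial on each piece. Hence
\[
\widetilde S_k=\Pi_h\,\mathcal{A}^{\tau/2}_x\,\Pi_h\,\mathcal{B}^{\tau}_{v}[E_h^*]\,\Pi_h\,\mathcal{A}^{\tau/2}_x ,
\]
a composition of $L^2$-isometries and $L^2$-contractions.

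\textbf{Telescoping decomposition.} I would write
\[
\Big(\prod_{k=0}^{n-1}\widetilde S_k\Big)\widetilde f_0-f(n\tau)=\Big(\prod_{k=0}^{n-1}\widetilde S_k\Big)(\widetilde f_0-f_0)+\sum_{m=0}^{n-1}\Big(\prod_{k=m+1}^{n-1}\widetilde S_k\Big)\big(\widetilde S_m f(m\tau)-f((m+1)\tau)\big).
\]
Stability is then needed in the form $\|\widetilde S_k g-\widetilde S_k h\|_{L^2}\le(1+C\tau)\|g-h\|_{L^2}$. The argument has two parts.
\begin{itemize}
\item Translations and projections are nonexpansive.
\item The nonlinear dependence enters only through $E$. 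Since $f$ has compact velocity support in $[-v_{max},v_{max}]$ and $\Omega_x$ is one-dimensional and periodic, the Poisson solve gives $\|E[g]-E[h]\|_{L^\infty}\le C\|g-h\|_{L^2}$. Shifting a smooth function by two different fields then costs $\tau\|\partial_v f\|_{L^\infty}\|E[g]-E[h]\|_{L^\infty}$.
\end{itemize}
This requires a uniform bound on $\partial_v$ of the exact solution. It is available from $f_0\in C^{\max\{p+1,3\}}$ and the regularity theory of 1D VP. I would also use an inductive hypothesis that the numerical solution stays within $O(1)$ of the exact one in $L^2$, closed at the end by the final bound. The discrete Gronwall inequality then gives $\prod(1+C\tau)\le e^{CT}$.

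\textbf{Local error.} The consistency error $\widetilde S_m f(m\tau)-f((m+1)\tau)$ splits into two pieces.
\begin{itemize}
\item $S_m f-\mathcal{E}_\tau f=O(\tau^3)$. This is the standard Strang local error for Vlasov--Poisson, which needs $f\in C^3$; this is the reason for the exponent $\max\{p+1,3\}$.
\item $\widetilde S_m f-S_m f$. This consists of three projection errors, each $O(h^{p+1})$ by standard approximation of $\Pi_h$ on $C^{p+1}$ functions, since translations preserve smoothness. It also contains the field error $E_h^*-E^*$, which is $O(h^{p+1})$ because $\rho_h$ is computed from $\Pi_h f$.
\end{itemize}
Summing the $n\le T/\tau$ local errors gives $C(\tau^2+h^{p+1}/\tau)$. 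The initial projection term contributes $Ch^{p+1}$. Together these give the claimed bound.

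\textbf{Main obstacle.} I expect the main difficulty to be the stability step, in combination with two practical features of the scheme.
\begin{itemize}
\item \emph{Cell-mean field.} Replacing $E$ by the cell mean $\bar E_i$ in the $v$-sweep introduces an $O(h)$ perturbation of the field. Naively this would destroy the $h^{p+1}$ rate. I would try to show that the resulting error is absorbed by the piecewise structure of $V_h^p$: after the $x$-projection $E_h$ is itself piecewise smooth. Failing that, I would state the lemma for pointwise-evaluated $E$, as in \cite{Einkemmer2014}.
\item \emph{PP limiter.} The limiter is not a linear projection. I would use the known property of the Zhang--Shu type scaling: when the exact solution is nonnegative, the limiter modifies a cell polynomial by at most $C$ times the local approximation error. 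It therefore perturbs the local error only at order $O(h^{p+1})$ and keeps the scheme nonexpansive up to $1+C\tau$.
\end{itemize}
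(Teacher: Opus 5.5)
\textbf{Verdict: same strategy as the paper, but the stability step fails as you formulated it.}

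Your route is the paper's, which in turn follows Einkemmer--Ostermann. A CSLDG sweep is $\Pi_h$ after an exact translation (the paper's $\widetilde S^{(\mathcal A)}=Pe^{\frac{\tau}{2}\mathcal A}$). Each step contributes $C(\tau^3+\tau h^{p+1}+h^{p+1})$, and discrete Gronwall yields $C(\tau^2+h^{p+1}+h^{p+1}/\tau)$. The paper writes this as the one-step recursion $e_{n+1}\le(1+C\tau)e_n+C(\tau^3+\tau h^{p+1}+h^{p+1})$ instead of a fan.

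\textbf{The gap is in your stability step.} Your $\widetilde S_k$ computes the field from its own argument, and you claim $\|\widetilde S_k g-\widetilde S_k h\|_{L^2}\le(1+C\tau)\|g-h\|_{L^2}$. Your only justification is the cost $\tau\|\partial_v f\|_{L^\infty}\|E[g]-E[h]\|_{L^\infty}$, which requires the function shifted in $v$ to be smooth.
\begin{itemize}
\item In the fan, that function is always a DG function; the inner $\Pi_h\mathcal A_x^{\tau/2}$ already makes it one.
\item For a piecewise polynomial with jumps, the $L^2$ difference of two $v$-shifts differing by $\delta$ scales like $|\delta|^{1/2}$, not $|\delta|$.
\item Splitting it into a smooth part plus a remainder leaves a term $2\|\text{remainder}\|_{L^2}$ with no factor $\|g-h\|_{L^2}$. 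So this is not a Lipschitz bound, and your $O(1)$ inductive hypothesis does not repair it.
\item The telescoping identity as you wrote it is, moreover, only valid for linear operators.
\end{itemize}

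\textbf{Fix.} Read $\widetilde S_k$ as the \emph{linear} operator whose field is frozen at the numerical value $E_h[\widetilde f_{k+1/2}]$. This is the natural meaning of $\prod_k\widetilde S_k$ in the statement, and it makes your identity correct.
\begin{itemize}
\item Then $\|\widetilde S_k\|_{L^2\to L^2}\le 1$ with no smoothness needed.
\item The field mismatch $E_h[\widetilde f_{m+1/2}]-E[\mathcal A_x^{\tau/2}f(m\tau)]$ moves into the local error, where it acts on $\Pi_h\mathcal A_x^{\tau/2}f(m\tau)$.
\item Writing $\Pi_h g=g+(\Pi_h g-g)$ bounds that contribution by $C\tau(e_m+h^{p+1})+Ch^{p+1}$. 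The resulting $\sum_m C\tau e_m$ is exactly what discrete Gronwall absorbs.
\end{itemize}
This is what the paper's Terms~1--2 are meant to do. Note that the paper's Term~1 applies the field difference to the DG function $\widetilde S^{(\mathcal A)}\widetilde f_n$, so it has the same defect. It is repaired the same way, by freezing the numerical field in the propagation term.

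\textbf{On the two obstacles you flag.} The paper's proof treats neither: it analyses the idealized scheme with the pointwise field in $e^{\tau\mathcal B}$ and no limiter.
\begin{itemize}
\item \emph{Cell-mean field.} Absorption cannot work when $p\ge1$. Since $E(x)-\bar E_i=E'(\bar x_i)(x-\bar x_i)+O(h^2)$, and $(x-\bar x_i)\partial_v f$ is reproduced by $\Pi_h$ up to higher-order terms, every $v$-sweep carries an $O(\tau h)$ error. That generically accumulates to $O(h)$, so your fallback to the pointwise field is necessary, not optional.
\item \emph{PP limiter.} The Zhang--Shu estimate bounds the modification by the \emph{total} error on the cell, not by the one-step consistency error. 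Moreover, the scaling limiter is not $L^2$-nonexpansive. Writing a one-variable $P^1$ cell polynomial as $\bar f+s\phi_1$ and taking $m_0\to0$, the limiter acts as $(\bar f,s)\mapsto(\bar f,\mathrm{clip}(s,-\bar f,\bar f))$, which can enlarge differences by the factor $\sqrt{4/3}$. So ``nonexpansive up to $1+C\tau$'' is unsupported. The consistent choice, as in the paper, is to state the estimate for the unlimited scheme.
\end{itemize}
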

\begin{proof}
    A detailed proof of this lemma can be found in SM2. Their corresponding macroscopic quantities, such as \(\rho\), \(u\), also satisfy this error estimate. The reader can refer to a simple proof on their own.
\end{proof}

\subsection{Main Error Estimate}
Here, the CSLDG method is used as a high-fidelity solver, while the second-order MUSCL is chosen as a low-fidelity solver. We now present the main results of Theorem 4.4 on the error estimation of the bi-fidelity method under the above notation and prerequisites.
\LL{We denote $\mathbf{U}^{\text{VP}}$ the exact moments quantities derived from the VP equation  \(\mathbf{U}^{\text{EP}}\) the exact moment solutions of the EP equation.}
\begin{assumption}\cite{Cohen2015}
Let the random variable \( z := (z_j)_{j \geq 1} \) have components each supported on a compact set. Suppose that the random initial data \( f_{\mathrm{in}} \) admits an affine representer \( (f_j)_{j \geq 1} \), meaning that there exist a deterministic function \( \tilde{f}_0 = \tilde{f}_0(x) \) (independent of \( z \)) and a sequence of functions \( (g_j)_{j \geq 1} \) such that
\[
f_{\mathrm{in}}(z) = \tilde{f}_0 + \sum_{j \geq 1} z_j g_j,
\]
with the sequence \( \left\{ \|g_j\|_{L^\infty(\Omega)} \right\}_{j \geq 1} \) belonging to \( \ell^\beta \) for some \( 0 < \beta < 1 \), where \( \Omega \) denotes the physical domain, \LL{and the $x,v$ dependence is omitted in $f$ and $g$.}
\end{assumption}

\begin{theorem}
    Let \(\mathbf{U}^H\) be the high-fidelity CSLDG solution, and \(\mathbf{U}^B\) be the bi-fidelity approximation. Then, through \(K\) high-fidelity simulation runs, there exist constants \(C_1, C_2 > 0\) (which are independent of \(K, \tau, h\)) such that the error estimate of the bi-fidelity method is given by:
\begin{equation}
\left\| \mathbf{U}^H(z) - \mathbf{U}^B(z) \right\|_{L_x^2 L_z^2}
\le \frac{C_1}{(K/2 + 1)^{q/2}}
+ C_2 \sqrt{\frac{K}{\lambda_0}}\mathcal{Q},
\end{equation}
where \(\mathcal{Q}=\left( \tau_L+ h^2+\tau^2_H+h^{p+1} + \frac{h^{p+1}}{\tau_H} +C' \right),\) \(\tau_H\) and \(\tau_L\) represent the time steps of the high- and low-fidelity solvers, respectively.
\(q = 1/\beta - 1 > 0\), \(\lambda_0 > 0\) is the minimum eigenvalue of the Gram matrix.
\end{theorem}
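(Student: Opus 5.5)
We use the decomposition \eqref{err_bf} and bound each of $\delta_{HL}$, $e_{\text{proj}}$ and $e_{\text{coef}}$ in $L_x^2L_z^2$. The triangle inequality then gives the total bound. The projection term produces the algebraic rate $C_1(K/2+1)^{-q/2}$. The other two terms produce the $\mathcal{Q}$-contribution: they are controlled by the pointwise-in-$z$ discrepancy $\|\mathbf{U}^H(z)-\mathbf{U}^L(z)\|_{L^2_x}$, amplified by the coefficient vector.

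\textbf{Step 1: the model/discretization discrepancy.} For each fixed $z$ we insert the exact moments $\mathbf{U}^{\text{VP}}$ and $\mathbf{U}^{\text{EP}}$:
\[
\mathbf{U}^H-\mathbf{U}^L=(\mathbf{U}^H-\mathbf{U}^{\text{VP}})+(\mathbf{U}^{\text{VP}}-\mathbf{U}^{\text{EP}})+(\mathbf{U}^{\text{EP}}-\mathbf{U}^L).
\]
We bound the pieces as follows.
\begin{itemize}
\item The first piece is bounded by Lemma 4.2, applied through moments: the velocity support is compact in $[-v_{max},v_{max}]$, so Cauchy--Schwarz in $v$ transfers the $L^2_{x,v}$ bound to $\rho$ and $\rho u$. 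This gives $C(\tau_H^2+h^{p+1}+h^{p+1}/\tau_H)$.
\item The second piece is bounded by Lemma 4.1, which gives $C'$ (up to taking a square root, absorbed into the constant).
\item For the third piece we use the standard error of forward-Euler MUSCL for smooth solutions, which gives $C(\tau_L+h^2)$.
\end{itemize}
All constants are uniform in $z$, because the initial-data norms are uniformly bounded. Hence $\sup_z\|\mathbf{U}^H(z)-\mathbf{U}^L(z)\|_{L^2_x}\le C\mathcal{Q}$. This bounds $\delta_{HL}$ directly, and it also bounds each snapshot difference $\mathbf{U}^L(z_{i_k})-\mathbf{U}^H(z_{i_k})$.

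\textbf{Step 2: the coefficient term.} We write $e_{\text{coef}}=(\Phi^L-\Phi^H)\mathbf{c}(z)$. Then
\[
\|e_{\text{coef}}\|\le\|\Phi^L-\Phi^H\|_{2}\,\|\mathbf{c}(z)\|_2\le\sqrt{K}\,C\mathcal{Q}\,\|\mathbf{c}(z)\|_2 ,
\]
using the Frobenius bound over $K$ columns. Since $\mathbf{c}=(\mathbf{G}^L)^{-1}(\Phi^L)^T\mathbf{U}^L$, we get
\[
\|\mathbf{c}\|_2\le\lambda_0^{-1/2}\,\|\mathcal{P}\mathbf{U}^L(z)\|\le\lambda_0^{-1/2}\,\|\mathbf{U}^L(z)\|.
\]
Here $\|\mathbf{U}^L(z)\|$ is uniformly bounded by stability of the LF solver. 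Combining these gives $C\sqrt{K/\lambda_0}\,\mathcal{Q}$. The $\delta_{HL}$ bound $C\mathcal{Q}$ is absorbed into this term, since $K/\lambda_0\ge 1$ after normalizing the snapshots (or after enlarging $C_2$).

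\textbf{Step 3: the projection term, which is the main obstacle.} The quantity $e_{\text{proj}}$ is the error of the greedy (pivoted-Cholesky) projection of $\mathbf{U}^L(z)$ onto the span of $K$ snapshots. We bound it in $L^2_z$ through the Kolmogorov $n$-width of the LF solution manifold. Under Assumption 4.3 we invoke the analytic/holomorphic parametric regularity theory of \cite{Cohen2015}: affine dependence with $(\|g_j\|_{L^\infty})\in\ell^\beta$ yields best $n$-term polynomial approximations with rate $n^{-(1/\beta-1)}$. This gives $d_n\lesssim n^{-q}$ for $\mathbf{U}^{\text{EP}}$, and hence for $\mathbf{U}^L$ up to the already counted discretization error. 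We then use the greedy-versus-$n$-width comparison (Binev et al./DeVore--Petrova--Wojtaszczyk type), $\sigma_K\le C\,d_{K/2}^{1/2}$-like loss. This converts the rate into $C_1(K/2+1)^{-q/2}$, which explains the $K/2$ and the halved exponent.

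The hard part is justifying holomorphic dependence on $z$ for the moment map of the nonlinear VP/EP dynamics. We would try to first assume it, as a standing regularity hypothesis implied by Assumption 4.3 in the cold/smooth regime, and cite \cite{Cohen2015}. Summing the three bounds and integrating over $z$ with respect to $\pi$ gives the theorem.
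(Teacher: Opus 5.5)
Your proposal is correct and follows the paper's proof essentially step by step. It uses the same three-term splitting \eqref{err_bf} and the same insertion of $\mathbf{U}^{\text{VP}},\mathbf{U}^{\text{EP}}$ with Lemmas 4.1--4.2 and the MUSCL rate for $\delta_{HL}$; the same Gram-eigenvalue/Cauchy--Schwarz bound $\|\mathbf{c}\|\le\lambda_0^{-1/2}\|\mathbf{U}^L\|$ for $e_{\text{coef}}$; and the same $n$-width argument for $e_{\text{proj}}$, where you usefully make explicit the greedy-versus-$n$-width comparison that the paper hides behind ``plug $K/2$ into the above equation'' (you also justify the final absorption of $C\mathcal{Q}$, which the paper leaves implicit).

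One caution: like the paper, you should assume the $n$-width decay directly for the discrete manifold $\mathbf{U}^L(I_z)$ rather than transfer it from $\mathbf{U}^{\text{EP}}$. The transfer adds $\sup_z\|\mathbf{U}^L(z)-\mathbf{U}^{\text{EP}}(z)\|_{L^2_x}\lesssim\tau_L+h^2$ inside $d_{K/2}$, and after the square root this becomes a $\sqrt{\tau_L+h^2}$ term that is not ``already counted'' in $\mathcal{Q}$.
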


\begin{proof}
From \eqref{err_bf}, applying the norm \(\| \cdot \|_{L_x^2 L_z^2}\) and triangle inequality, we get:
\begin{equation}\label{bi_error}
\|\mathbf{U}^H - \mathbf{U}^B\|_{L_x^2 L_z^2} 
\leq \| \delta_{HL} \|_{L_x^2 L_z^2} 
+ \| e_{\text{proj}} \|_{L_x^2 L_z^2} 
+ \| e_{\text{coef}} \|_{L_x^2 L_z^2}.
\end{equation}

The first term \(\| \delta_{HL} \|_{L_x^2 L_z^2}\) represents the error between the high-fidelity and low-fidelity solutions. We divide the error into high-fidelity scheme's error, model error, and low-fidelity scheme's error.
Now we directly use the conclusions of Lemma 4.1 and Lemma 4.2 and get
\begin{equation}\label{bi_error_1}
\begin{split}
 \|\mathbf{U}^H - \mathbf{U}^L\|_{L_x^2 L_z^2} & \le \|\mathbf{U}^H - \mathbf{U}^{\text{VP}}\|_{L_x^2 L_z^2} + \|\mathbf{U}^{\text{VP}} - \mathbf{U}^{\text{EP}}\|_{L_x^2 L_z^2} + \|\mathbf{U}^{\text{EP}} - \mathbf{U}^L\|_{L_x^2 L_z^2}\\
 & \le C \left( \tau^2_H + h^{p+1} + \frac{h^{p+1}}{\tau_H} \right)+C'+C(\tau_L+h^2)\\
 & \le C\left( \tau_L+ h^2+\tau^2_H+h^{p+1} + \frac{h^{p+1}}{\tau_H}\right) +C' .
 \end{split}
\end{equation}
The second term \(\|e_{\text{proj}} \|_{L_x^2 L_z^2}\) represents the projection error of the low-fidelity solution.
Let \(d_K(\mathbf{U}^L(I_z))\) denote the Kolmogorov \(K\)-width of the manifold \(\mathbf{U}^L(I_z)\), i.e.,
\(
d_K(\mathbf{U}^L(I_z)) = \inf_{\dim(V_K)=K}\; \sup_{u\in \mathbf{U}^L(I_z)} d^L(u, V_K).
\)
Take \(\mathcal{H}=H_x^s\), from previous studies \cite{Jinerror2021,Lin2023Error}, the projection error satisfies
\begin{equation*}
d_K(\mathbf{U}^L(I_z))_{\mathcal{H}}
\le \| \mathbf{U}^L - \sum_{\nu\in \Lambda_K} w_\nu \mathcal{P}_\nu \|_{L^\infty(I_z,\mathcal{H})}
\le \frac{C}{(K+1)^q}, \qquad q = \frac{1}{\beta} - 1,
\end{equation*}
where \(\beta\) is a constant associated with the affine representer \((f_j)_{j\ge1}\) in the random initial data (see Assumption 4.3), and the \(\sum_{\nu\in\Lambda_K} w_\nu P_\nu\) denotes the truncated Legendre expansion. 
Plug \(K/2\) into the above equation, we obtain
\begin{equation}\label{bi_error_2}
    \| e_{\text{proj}} \|_{L_x^2 L_z^2} \le \frac{C_1}{(K/2+1)^{q/2}}.
\end{equation}

\LL{In order to derive a bound for \( \| \mathbf{c} \| \) thereby estimate the third term  \(\| e_{\text{coef}}\|_{L_x^2 L_z^2}\), 
by integrating over the physical domain \(\Omega\), one obtains}
\begin{align*}
\int_{\Omega} \left( \mathcal{P}_{\mathbf{U}_j^L(\gamma_N)} \big[ \mathbf{U}_j^L(z) \big] \right)^2 dx
&= \sum_{k,s=1}^K c_k(z) c_s(z) \int_{\Omega} \mathbf{U}_j^L(z_k)\, \mathbf{U}_j^L(z_s)\, dx \\
&:= \mathbf{c}^T \mathbf{G}^L \mathbf{c}\ge \lambda_{j,0}\, \| \mathbf{c} \|^2,
\end{align*}
where \(\mathbf{G}^L\) denotes the Gramian matrix of \(\mathbf{U}_j^L(\gamma_N)\) as defined in bi-fidelity Algorithm (with the inner product taken in \(L_x^2\)), and \(\lambda_{0} > 0\) be the smallest eigenvalue of \(\mathbf{G}_j^L\).
\LL{For each \(z\) it can be derived that 
\begin{equation}\label{term3_c}
    \left( \sum_{k=1}^K \| c_k(z) \|_{L^2_z}^2 \right)^{1/2}
\lesssim \frac{1}{\sqrt{\lambda_{0}}} \int_{\Omega} \left( \mathcal{P}_{\mathbf{U}_j^L(\gamma_N)} \big[ \mathbf{U}_j^L(z) \big] \right)^2 dx\lesssim \frac{1}{\sqrt{\lambda_{0}}}\, \big\| \mathbf{U}_j^L(z) \big\|_{L^2_xL^2_z}.
\end{equation}
Please refer the details in \cite{Jinerror2021,LIN202547}.}



Now, we using \eqref{bi_error_1} and \eqref{term3_c}, with the Cauchy-Schwarz inequality, one has
\begin{equation}\label{bi_error_3}
    \begin{split}
        \| e_{\text{coef}} \|_{L_x^2 L_z^2} &\leq \left( \sum_{k=1}^K \|c_k(z)\|^2 \right)^{1/2} \left( \sum_{k=1}^K \|\mathbf{U}^H(z_k) - \mathbf{U}^L(z_k)\|_{L^2_x L^2_z}^2 \right)^{1/2} \\
&\leq \frac{\sqrt{K}}{\sqrt{\lambda_0}} \|\mathbf{U}^L(z)\|_{L^2_x L^2_z}\left( \max_k \|\mathbf{U}^H(z_k) - \mathbf{U}^L(z_k)\|_{L^2_x}^2 \right) \\
&\leq C \sqrt{\frac{K}{\lambda_0}} \left( \tau_L+ h^2+\tau^2_H+h^{p+1} + \frac{h^{p+1}}{\tau_H} +C' \right).
    \end{split}
\end{equation}

\LL{Finally, let \(\mathcal{Q}= \left( \tau_L+ h^2+\tau^2_H+h^{p+1} + \frac{h^{p+1}}{\tau_H} +C' \right) \) and plug \eqref{bi_error_1}, \eqref{bi_error_2} and \eqref{bi_error_3} into \eqref{bi_error}, we derive that}
\begin{equation*}
    \|\mathbf{U}^H - \mathbf{U}^B\|_{L_x^2 L_z^2} 
\leq C\mathcal{Q}
+ \frac{C_1}{(K/2+1)^{q/2}}
+C \sqrt{\frac{K}{\lambda_0}}\mathcal{Q}
\leq \frac{C_1}{(K/2+1)^{q/2}}
+C_2 \sqrt{\frac{K}{\lambda_0}}\mathcal{Q}.
\end{equation*}
\end{proof}
\section{Numerical experiments}
The numerical experiments in this section systematically evaluate the performance, applicability, and scalability of the proposed structure-preserving SL bi-fidelity framework. 
For deterministic problems, we use CSLDG to solve the VP system, examining spatial convergence order, conservation properties, and the resolution of fine phase-space structures, such as vortex filaments and stripes. 
For uncertain problems, we introduce random parameters into the initial conditions, thereby extending the model to a VP model with stochastic variables, focusing on verifying the robustness and statistical convergence of the bi-fidelity algorithm. 
In addition, we extend the algorithm to more challenging two-component (electron-ion) plasma systems and higher-dimensional phase-space settings, and the numerical results demonstrate good stability and scalability of the algorithm. 
For all simulations, unless otherwise specified, 
the computational domain is $[0, L] \times [-v_{max}, v_{max}]$, and the spatial domain length is set as \(L = \frac{2\pi}{\kappa}\).

\subsection{Deterministic problem}
We considere by CSLDG for application in solving a series of classical deterministic problems of the VP model, and the scheme was extended to high-dimensional tests, including Landau damping and two-stream instability. 
Let the time step size  \(\Delta t= \frac{\text{CFL}}{\frac{v_{max}}{\Delta x} + \frac{max(|E|)}{\Delta v}}\).

For problems without exact solutions, we estimate the convergence rate by refine the mesh step by step. 
Specifically, let \(f_{h}\) represent the numerical solution of the mesh split under \(N_h\). The error approximates the difference between the solutions of the continuous refinement hierarchy. 
We employ both the discrete \(L^2\)-norm \(\|\cdot\|_{L^2}\) and the discrete \(L^\infty\)-norm \(\|\cdot\|_{L^\infty}\). The experimental orders of convergence are then defined as
\[
\mathrm{Order} = \frac{\log \|f_{h} - f_{h/2}\|_{L^\eta} - \log \|f_{h/2} - f_{h/4}\|_{L^\eta}}{\log 2}, \quad \eta = 2, \infty.
\]

\subsubsection{1D1V}
\begin{example}[Landau damping]
\begin{align}\label{1D1V:ex1}
f_0(x, v) = \frac{1}{\sqrt{2\pi}} (1 + \alpha \cos(\kappa x)) \exp\left(-\frac{v^2}{2}\right),
\end{align}
where \(\alpha=0.01\) and \(0.5\) correspond to the weak and strong Landau damping, respectively.
We truncate the velocity domain at $v_{max} = 2\pi$, and \(\kappa = 0.5\).
\end{example}

Table~\ref{tab:convergence} reports the convergence study, which validates the spatial accuracy of the proposed CSLDG solver. 
We set $\mathrm{CFL}=0.01$ and choose the time step as $\Delta t=\mathrm{CFL}\cdot\left(\min\{\Delta x,\Delta v\}\right)^{p+1}$, so that the temporal error is negligible and the spatial error dominates. 
For the $P^0$-, $P^1$-, and $P^2$-CSLDG schemes, the error norms decrease at rates of approximately first, second, and third order, respectively, as the mesh is refined. 
These observed rates match the theoretical order $p+1$ and are consistent with the error analysis in Section~4.1.

\begin{table}[htbp]
\centering
\caption{Weak Landau damping with \( T = 1 \). Order of accuracy in space for CSLDG solver (CFL=0.01).}
\label{tab:convergence}
\begin{tabular}{c c c c c c}
\toprule
& Mesh & \multicolumn{1}{c}{\(L^2\) error} &  \multicolumn{1}{c}{Order} & \multicolumn{1}{c}{\(L^\infty\) error} & \multicolumn{1}{c}{Order} \\
\midrule
& 16   & 5.2589E-01 & --   & 8.2046E-02 & --      \\
\(P^0\)-CSLDG & 32   & 2.5579E-01 & 1.04 & 4.6228E-02 & 0.83    \\
& 64   & 1.2991E-01 & 0.98 & 2.4191E-02 & 0.93    \\
& 128  & 6.5238E-02 & 0.99 & 1.2087E-02 & 1.00    \\
\midrule
& 16   & 1.1031E-01 & --   & 2.8982E-02 & --      \\
\(P^1\)-CSLDG & 32   & 3.5694E-02 & 1.63 & 1.0655E-02 & 1.44    \\
& 64   & 9.1707E-03 & 1.96 & 3.4030E-03 & 1.65    \\
& 128  & 2.3843E-03 & 1.94 & 1.0211E-03 & 1.74    \\
\midrule
& 16   & 5.7414E-02 & --   & 1.7433E-02 & --      \\
\(P^2\)-CSLDG & 32   & 3.8824E-03 & 3.89 & 1.0924E-03 & 4.00    \\
& 64   & 5.0956E-04 & 2.93 & 1.5403E-04 & 2.83    \\
& 128  & 6.8032E-05 & 2.90 & 2.1819E-05 & 2.82    \\
\bottomrule
\end{tabular}
\end{table}

Fixed mesh of $160\times 160$ elements and set $\text{CFL}=10$, the $P^1$-CSLDG scheme is applied to simulate the Landau damping at \(T=40\).
Fig.\ref{dp_landau damping_f} show the phase‑space distribution remains smooth and Gaussian‑like under weak damping, while under strong damping the scheme clearly resolves the emerging filamentary structures. 
Fig.\ref{dp_landau damping_error} shows the time evolution of the electric field energy. 
For different CFLs, the weak damping case (\(\alpha=0.01\)) exhibits a decay rate matching the theoretical value \(\gamma=-0.153\), similarly, in the strong damping case (\(\alpha=0.5\)), regardless of the CFL number, the electric energy exhibits a decay rate of \(\gamma_1=-0.287\) followed by a subsequent growth rate of \(\gamma_2=0.085\), in agreement with~\cite{Cai2018,FILBET2001166,Heath2012}.

From the perspective of the conservation proper Fig.\ref{dp_landau damping_error}, the numerical scheme strictly preserves mass conservation, with mass errors close to machine precision. 
However, momentum conservation cannot be strictly guaranteed and only exhibits relatively good numerical conservation. 
Especially under strong Landau damping, momentum errors show severe oscillations and increase. 
Combined with the analysis in Remark 2, this phenomenon may be due to smaller CFL numbers leading to more time steps, which in turn increases error accumulation.

\begin{figure}[!htbp]
    \centering
     \includegraphics[width=0.48\linewidth]{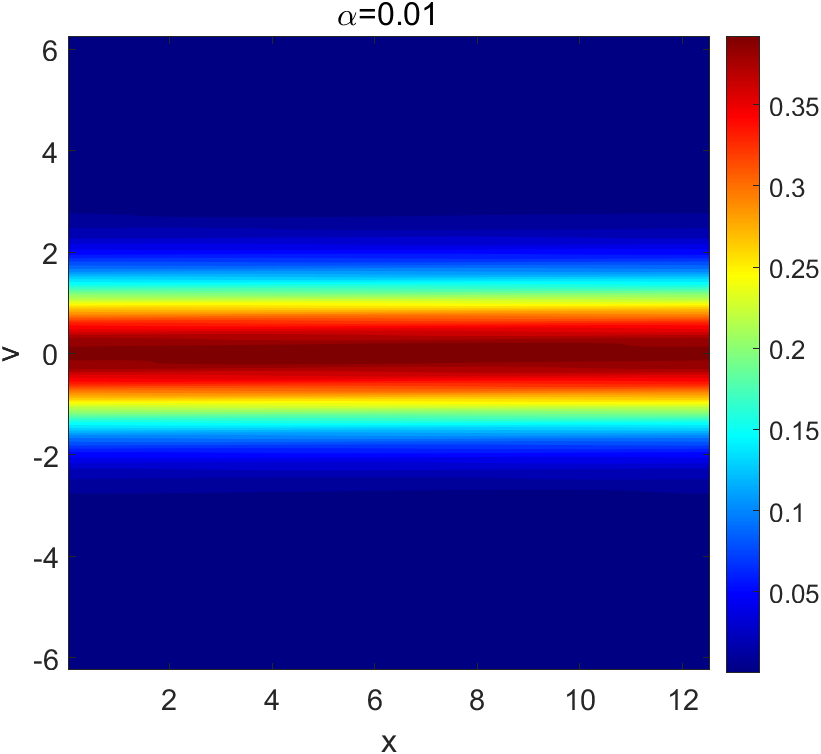}
     \includegraphics[width=0.48\linewidth]{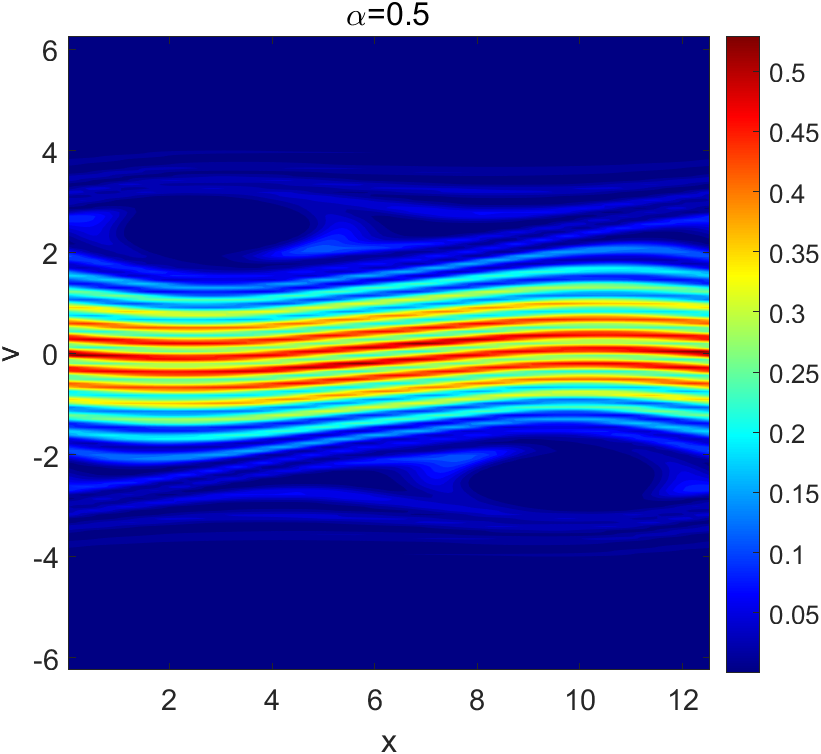}
    \caption{Weak(left)/Strong(right) landau damping: The distribution function \(f\) simulated by $P^1$-CSLDG at $T = 40$.} 
    \label{dp_landau damping_f}
\end{figure}

\begin{figure}[!htbp]
    \centering
    \includegraphics[width=0.48\linewidth]{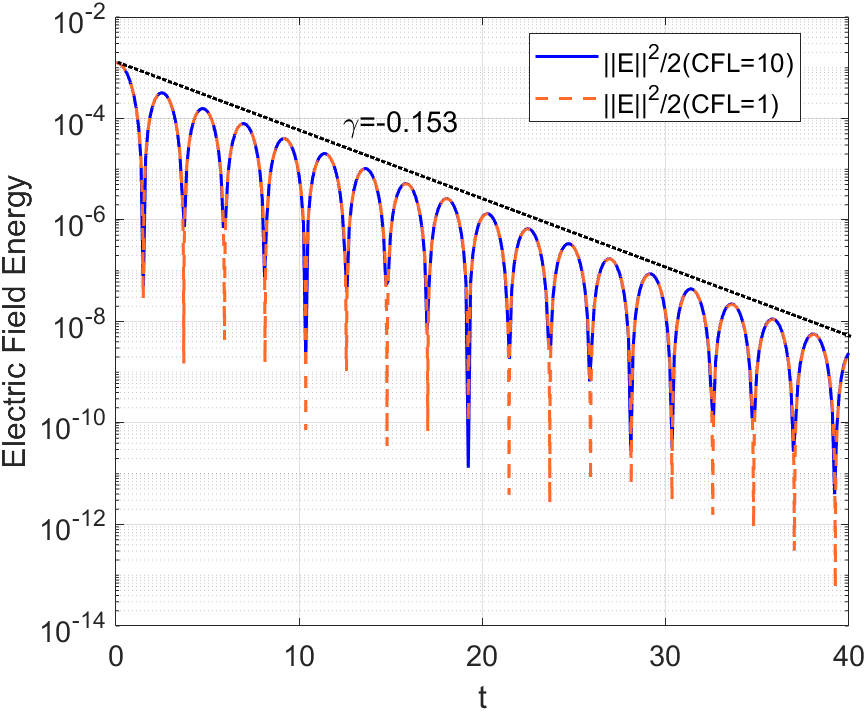}
    \includegraphics[width=0.48\linewidth]{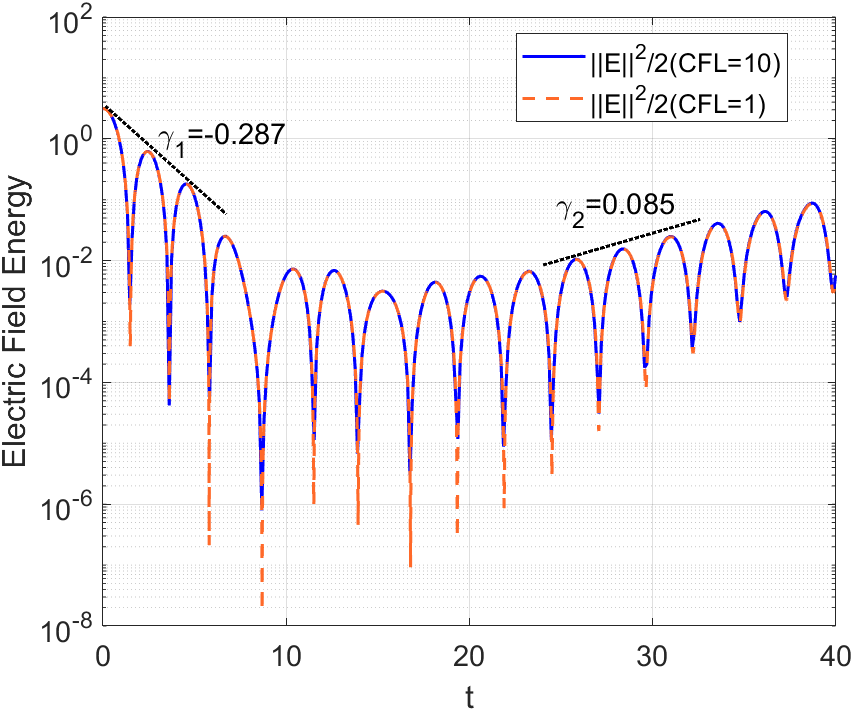}\\
    \includegraphics[width=0.48\linewidth]{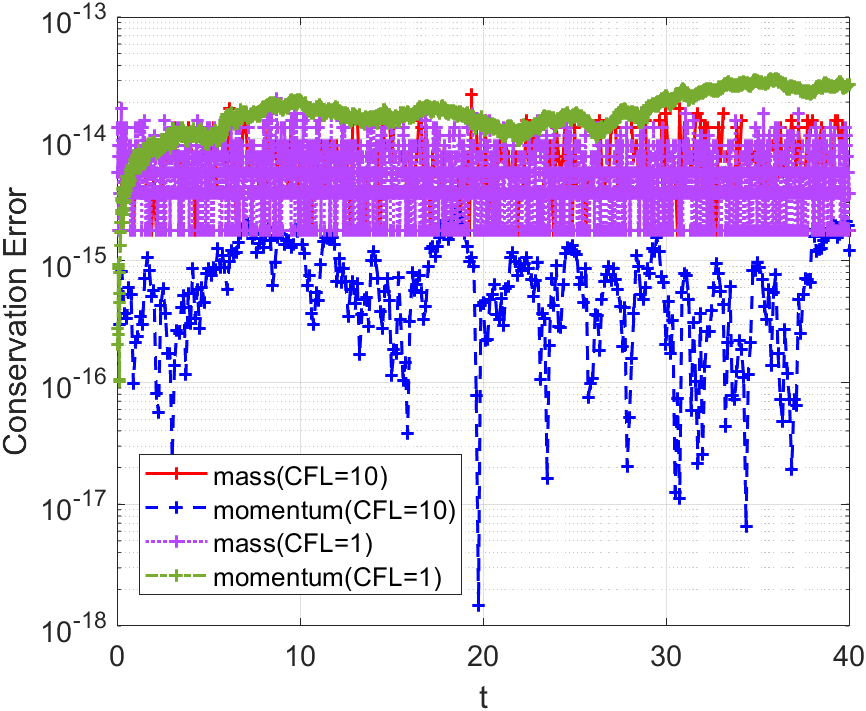}
    \includegraphics[width=0.48\linewidth]{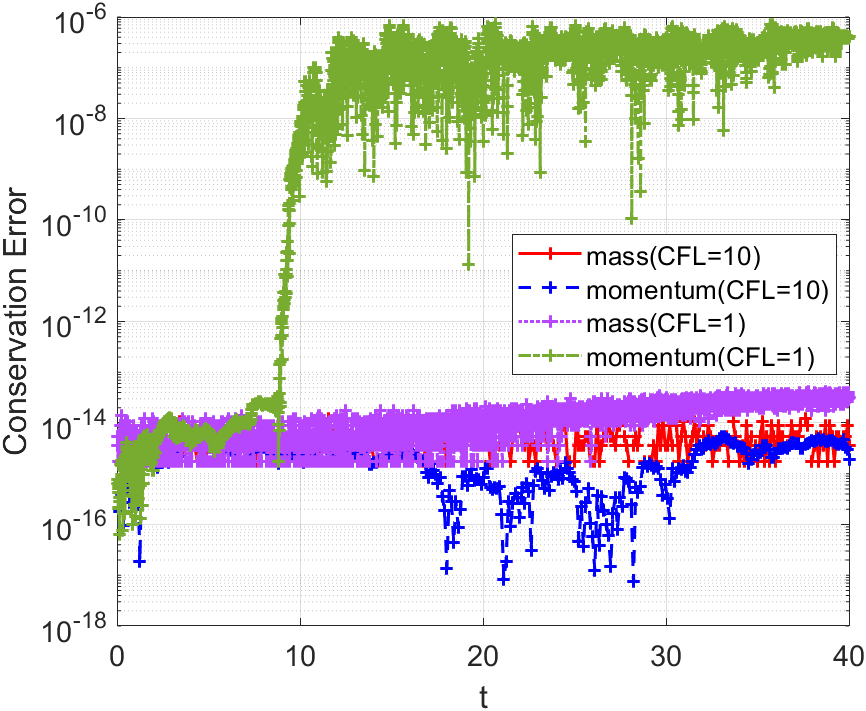}
    \caption{Weak(left)/Strong(right) landau damping: Time evolution of the electric energy (Top); Time evolution of the error in conserved quantities (bottom).}
    \label{dp_landau damping_error}
\end{figure}

\subsubsection{2D1V}
In this part, we proceed to apply the CSLDG slover to a 2D1V (two space dimensions and one velocity dimension) electrostatic plasma wave simulation. The objective is to assess the numerical accuracy of the scheme and to examine its conservation behavior when employed in a multi-dimensional setting.

Now, we give a brief presentation of the scheme's application to 2D1V simulations. 
The Strang operator splitting is adopted, with each time step decomposed into six substeps, like the algorithm in Section~3.1 illustrates the procedure.
The procedure proceeds as follows: 
first, an \(x\)-sweep of half a time step (\(\Delta t/2\)) is performed, transporting the distribution in the \(x\)-direction with speed \(v\). 
Second, a \(y\)-sweep of \(\Delta t/2\) transports the distribution in the \(y\)-direction with the same speed \(v\). 
Third, the two-dimensional Poisson equation is solved to compute the electric field \(E\) from the updated charge density \(\rho\). 
Fourth, a full-step \(v\)-sweep (\(\Delta t\)) accelerates the particles in velocity space using the computed \(E\). 
Fifth, another \(y\)-sweep of \(\Delta t/2\) is carried out. 
Finally, the sixth substep performs another \(x\)-sweep of \(\Delta t/2\), completing the time step. 
This symmetric sequence ensures second-order temporal accuracy characteristic of Strang splitting.

\begin{example}[Multi‑vortex two‑stream instability \cite{Liu2023}]
\begin{align}\label{ex2}
f_0(x,y,v) = \frac{1}{2v_{\text{th}}\sqrt{2\pi}}
\Bigl[\exp\!\bigl(-\tfrac{(v-u)^2}{2v_{\text{th}}^2}\bigr) +
\exp\!\bigl(-\tfrac{(v+u)^2}{2v_{\text{th}}^2}\bigr)\Bigr]
\Bigl[1 + \alpha\bigl(\cos(\kappa_x x)+\cos(\kappa_y y)\bigr)\Bigr],
\end{align}
with parameters \(u = 0.99\), \(v_{\text{th}} = 0.3\), \(\alpha = 0.05\), and \(\kappa_x = \kappa_y = 2/13\), the velocity domain at $v_{max} = 2\pi$.
\end{example}

Fixed mesh of $128\times128\times128$ elements and set $\text{CFL}=10$, the phase-space structures of the multi-vortex two-stream instability solution at the final simulation time \(T=40\) are visualized in Fig.\ref{dp_multi‑vortex two‑stream}. 
As observed from the cross-sectional slices and the 3D perspective, the initially symmetric double-hump distribution undergoes severe nonlinear deformation driven by the two-stream instability. 
The instability triggers the formation of characteristic phase-space holes (vortices) and intense filamentation, indicating strong particle-trapping effects.

The corresponding evolution of the electrostatic energy and the conservation properties is illustrated in Fig.\ref{dp_multi‑vortex two‑stream_conservation}. As shown in the left panel, the electric field energy initially undergoes exponential growth driven by the linear unstable modes, before entering a nonlinear saturation phase. 
After saturation, the energy levels exhibit complex, multi-frequency oscillations that persist over the long-time evolution.
The right panel highlights the global numerical errors for both mass and momentum are maintained close to machine precision, validates the high accuracy, long-term stability, and conservative nature of the numerical method in resolving complex VP dynamics.
\begin{figure}[!htbp]
    \centering
    \includegraphics[width=0.49\linewidth]{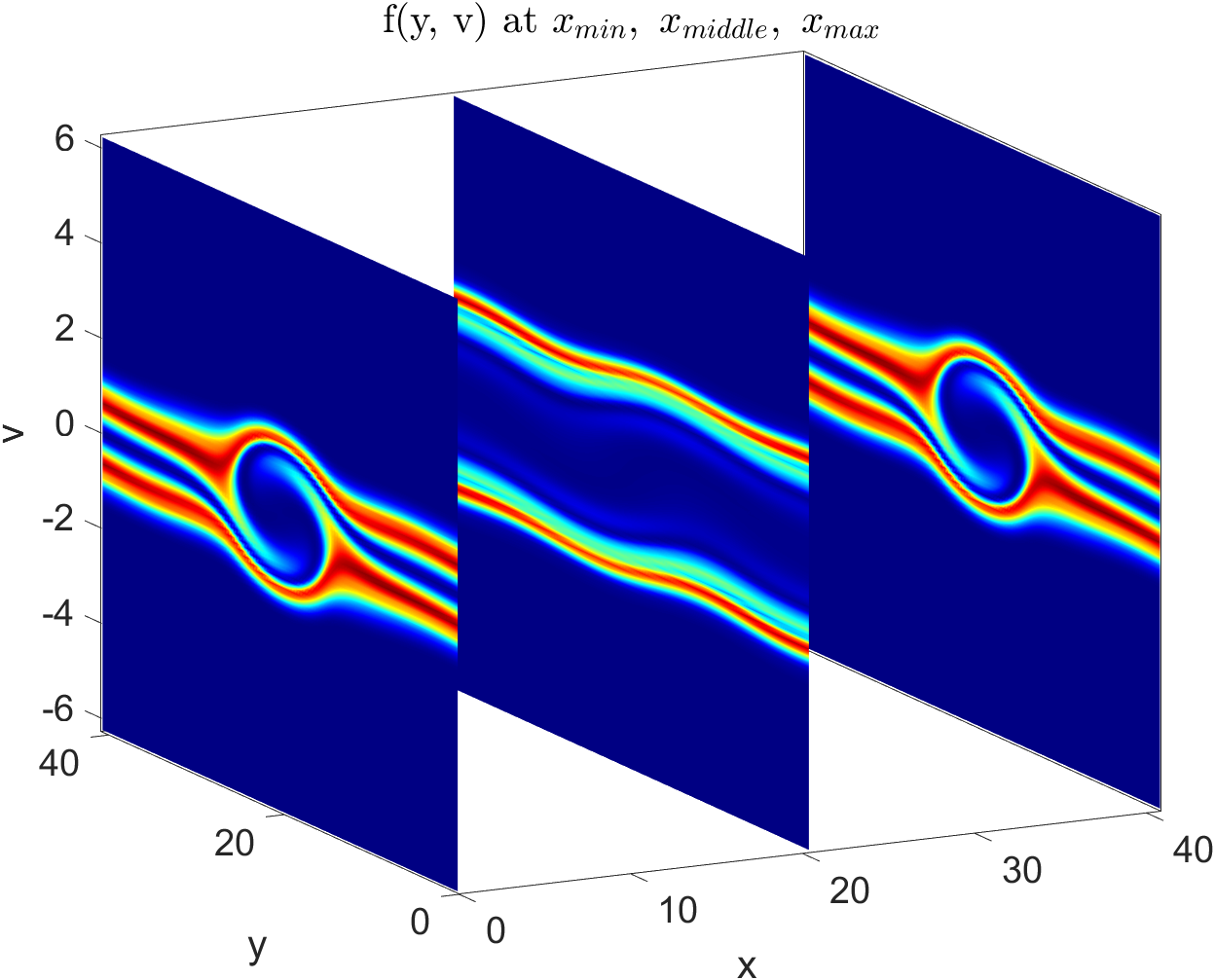}
    \includegraphics[width=0.49\linewidth]{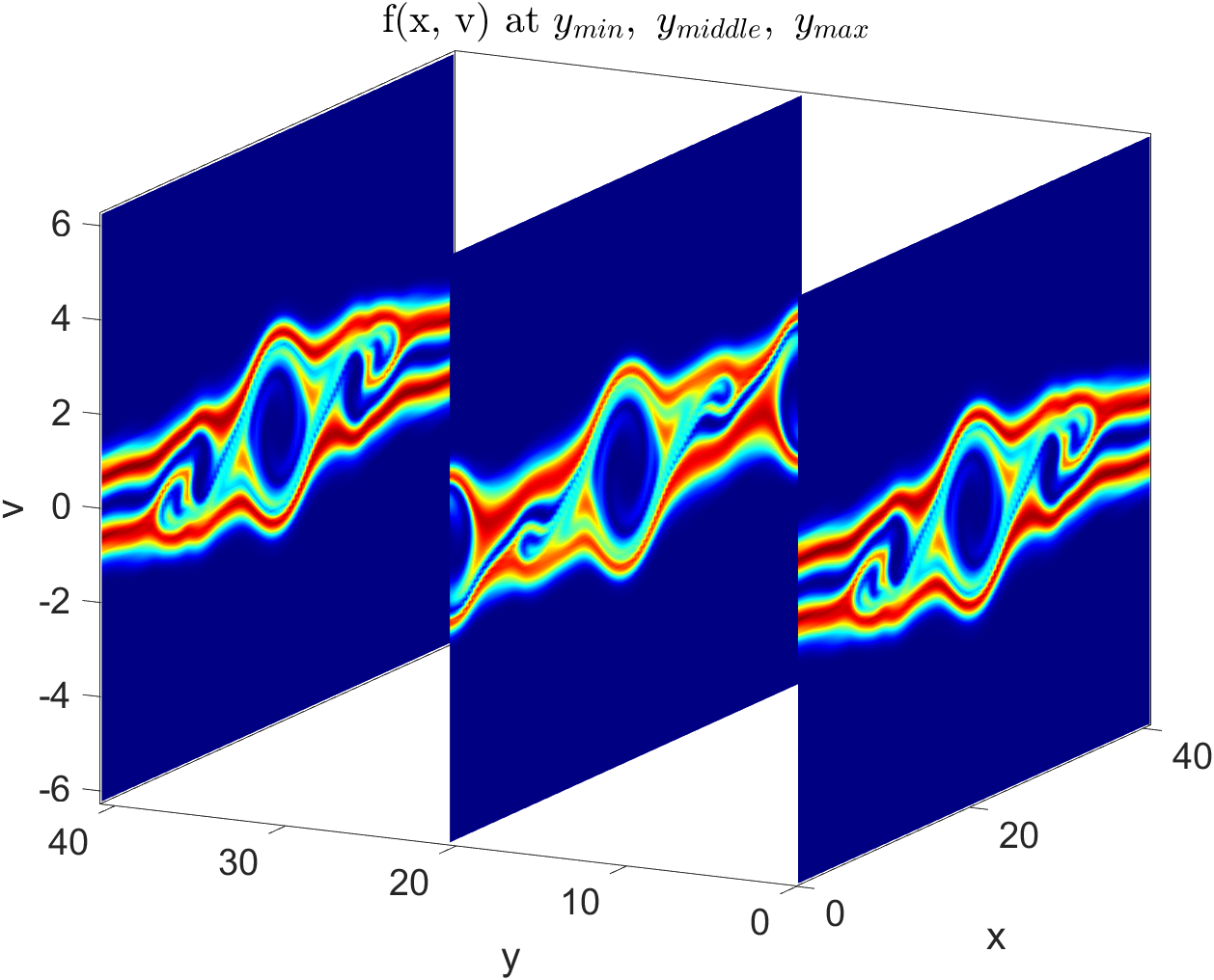}\\
    \includegraphics[width=0.5\linewidth]{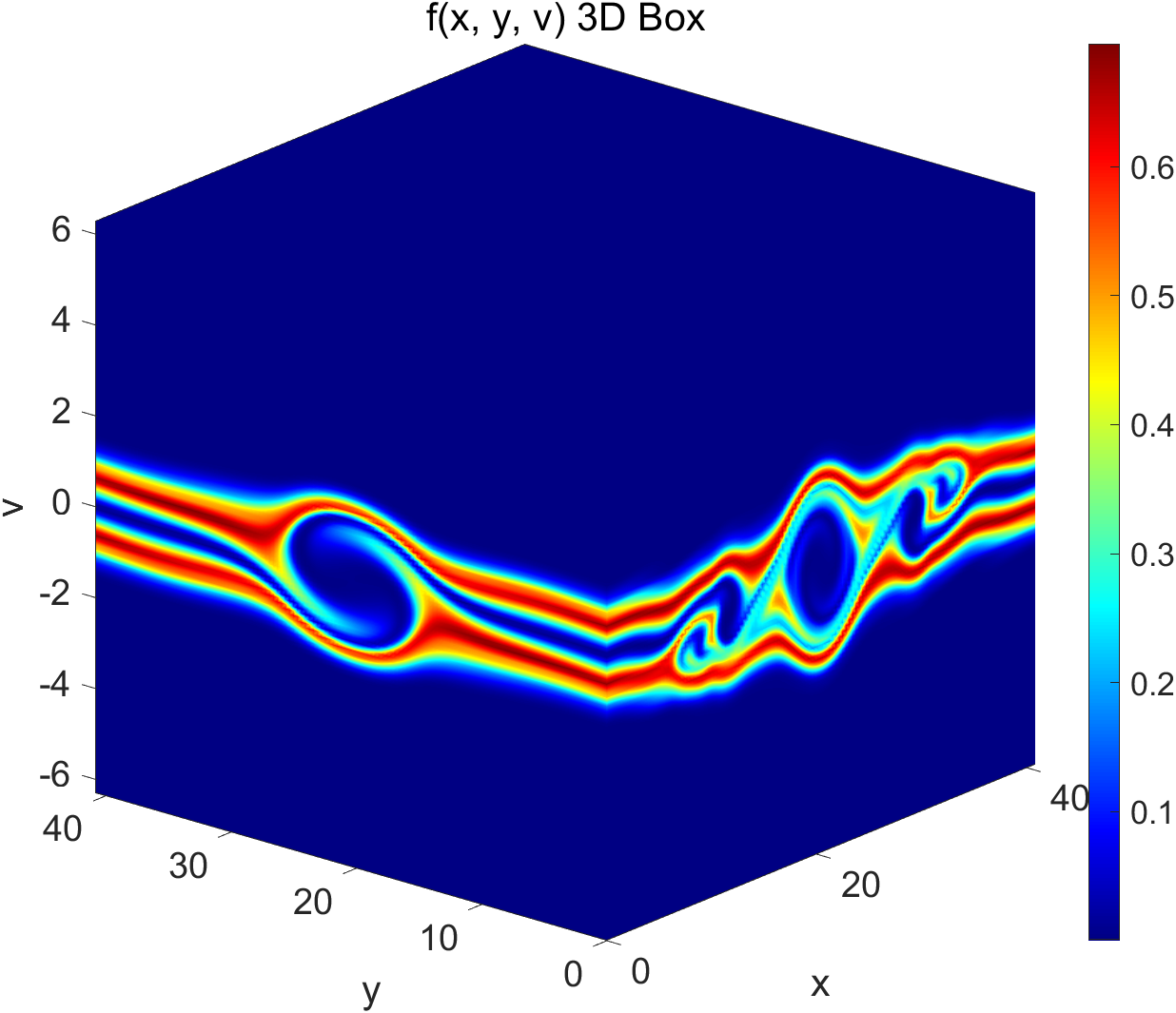}
    \caption{Snapshots of the Multi-vortex two-stream instability distribution function \(f(x, y, v)\) simulated by $P^1$-CSLDG at \(T=40\): The top row displays cross-sections at different \(x, y\)-coordinates; the bottom panel shows the 3D box view of the phase-space vortices.}
    \label{dp_multi‑vortex two‑stream}
\end{figure}

\begin{figure}[!htbp]
    \centering
    \includegraphics[width=0.49\linewidth]{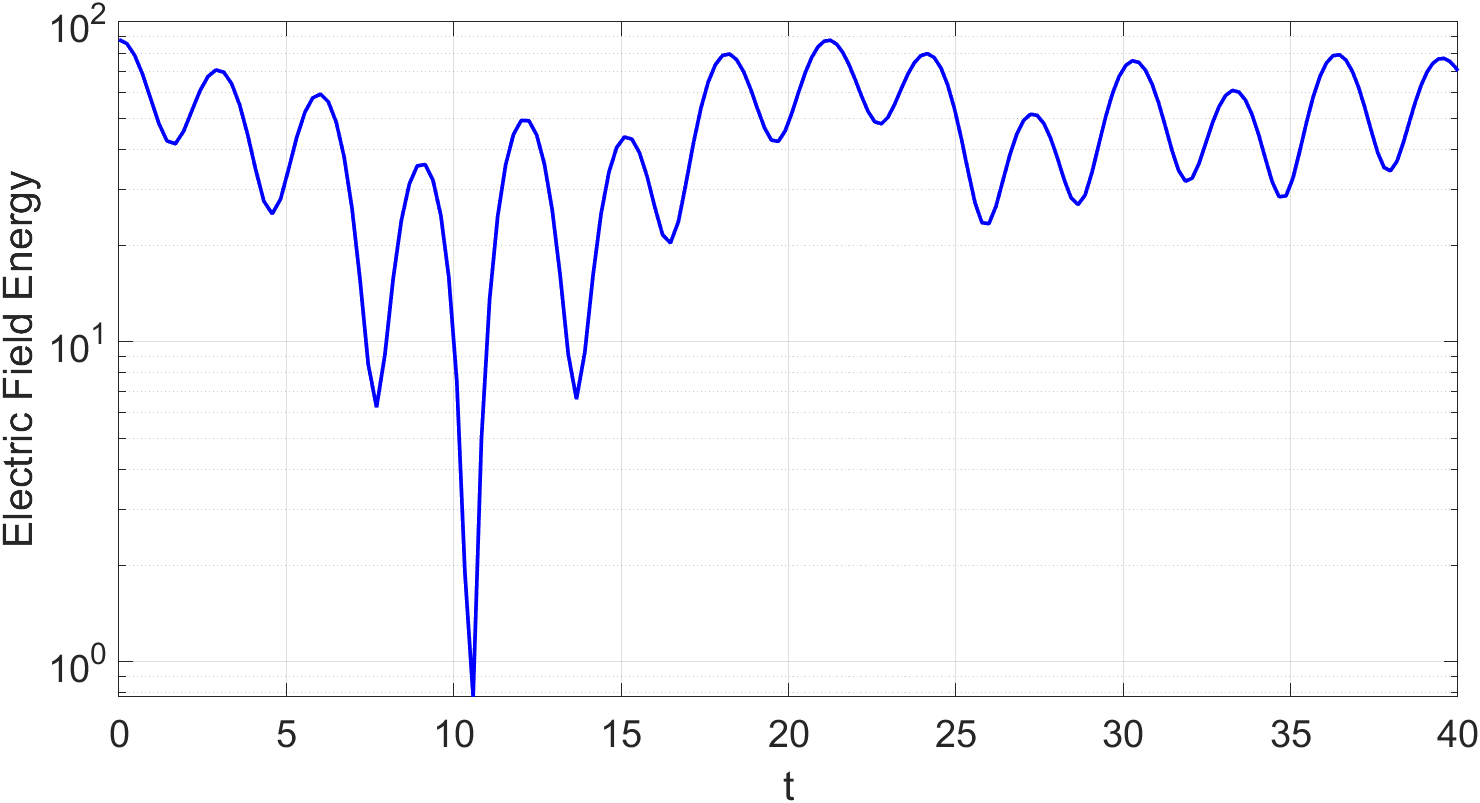}
    \includegraphics[width=0.49\linewidth]{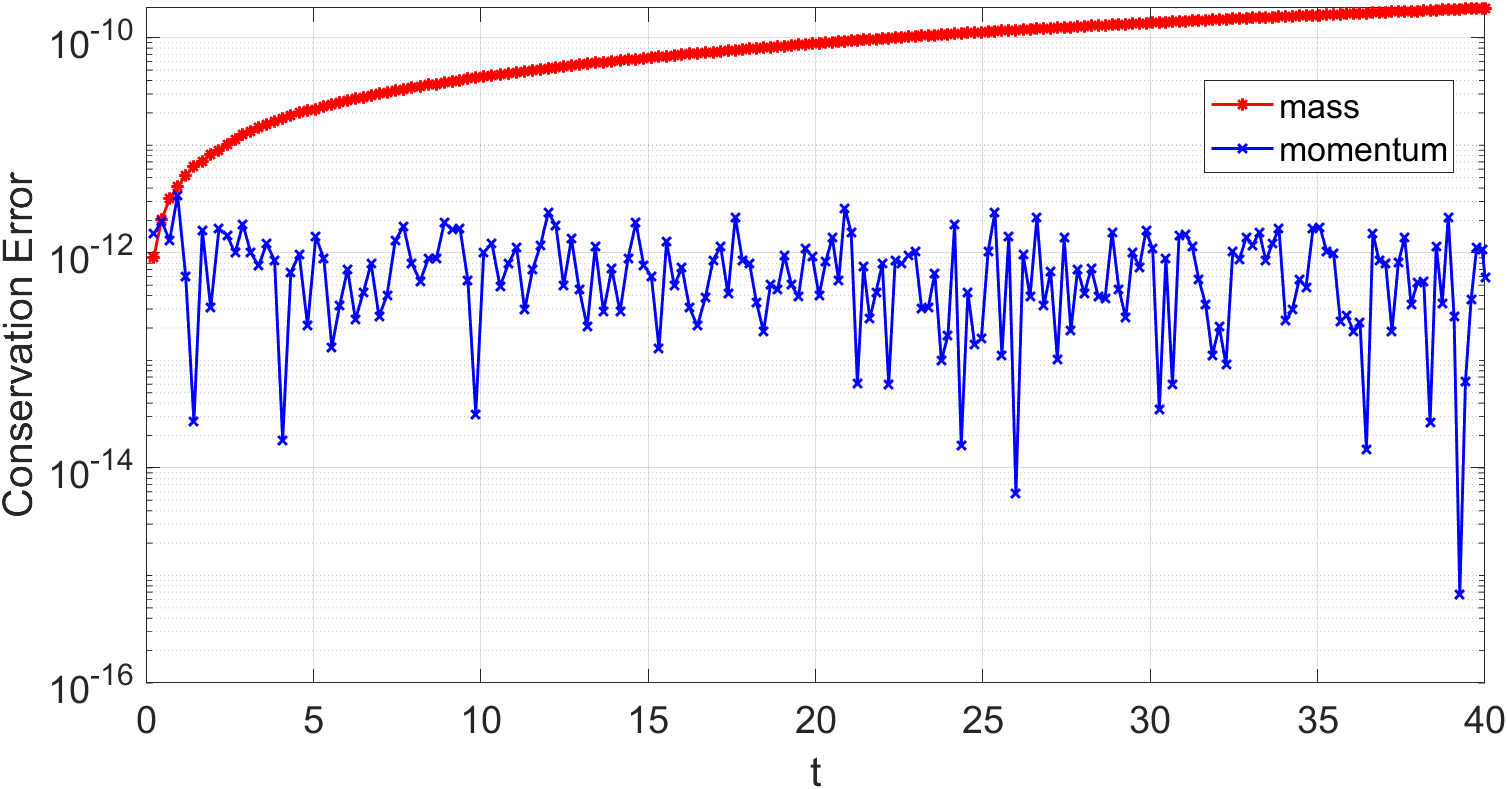}
    \caption{Multi-vortex two-stream instability: Time evolution of the electric energy (left) and the error in conserved quantities (right).}
    \label{dp_multi‑vortex two‑stream_conservation}
\end{figure}

\subsection{Uncertain problem}
In this section, we extend study to VP system with random parameters, exploring the efficiency and accuracy of the bi-fidelity method we introduced.
The high-fidelity model is solved using the $P^1$-CSLDG method, at this point, for the bi-fidelity framework, the $P^1$-CSLDG can still provide a sufficiently accurate reference.
The low-fidelity model is solved using second-order MUSCL and the time step set \(\Delta t = 0.01\). 

For the complete initial conditions with uncertain parameters---both the microscopic
distribution functions $f_0$, the corresponding macroscopic moments $\rho_0$ and $ (\rho u)_0$.
To assess the accuracy of our method,
given $K$ dependent test samples $\{z_j^\text{test}\}_{j=1}^{K}$,
the average $L^2$ error for each physical observable is estimated as
\begin{equation}
  \label{eq:err_estimator}
  \mathrm{err}_{(\rho, u)}
  =\left\| \mathbf{U}^H(z) - \mathbf{U}^B(z) \right\|_{L_x^2 L_z^2}
  = \frac{1}{K}\sum_{j=1}^{K}
  \sqrt{\frac{1}{N_h}\sum_{l=1}^{N_h}
  \bigl([\mathbf{U}^H(z_j)]_{l} - [\mathbf{U}^B(z_j)]_{l}\bigr)^2},
\end{equation}
the low-fidelity baseline error is computed using the same test
samples and error metric, with the high-fidelity solution serving
as the reference. Specifically, it is obtained by replacing
$\mathbf{U}^B$ with $\mathbf{U}^L$ in the above formula.
In all the examples below, periodic boundary conditions are assumed. 
Without loss of generality, we assume that the $d_z$ dimensional random variable $z$ follows a uniform distribution over $[-1, 1]^{d_z}$. 
The training set consists of \(M = 1000\) random samples of \(z\), using \(K = 100\) to check the error of the bi-fidelity approximation.

\subsubsection{Single-component}
In this example, consider introducing random parameters on the Maxwellian equilibrium state, and the corresponding macroscopic initial values can be obtained by taking the moments.
\begin{example}
\begin{align}
\begin{cases}
f_0(x,v,z) &= \dfrac{n_0}{\sqrt{2\pi T_0}} e^{-(v-u_0)^2/2} \Bigl[1 + S(x,z)\Bigr],\\
\rho_0(x,z) &= n_0\Bigl[1 + S(x,z)\Bigr],\\
(\rho u)_0(x,z) &= n_0u_0\Bigl[1 + S(x,z)\Bigr],
\end{cases}
\end{align}
    take strong Landau damping in the above deterministic problem as an example, \(S(x,z)=\sum_{m=1}^{2} \dfrac{\alpha_0}{m}\, z_m \cos(m\kappa x),\) let \(\alpha_0 = 0.5, \kappa = 0.5, \, n_0=1+0.05z_3, \, u_0=0.02z_4, T_0=1+0.05z_5,\) and \(d_z=5\), \(N_h=N_l=128\), the velocity domain $v_{max} = 2\pi$.
\end{example}

From the left sidebar of Fig.\ref{up_single-component}, we can clearly see the fast convergence between high-fidelity and bi-fidelity.
As the number of high-fidelity runs \(r\) increases, the $L^2$ errors of \(\rho\) and \(u\) drop significantly. With only about 10 high-fidelity runs, the bi-fidelity approximation can reach an accuracy of around \(\mathcal{O}(10^{-2})\).
In contrast, the low-fidelity error (shown as a red dashed line) with an error level of around \(\mathcal{O}(10^{-1})\).
To further illustrate the performance of our bi-fidelity method, we compared the low-fidelity, high-fidelity, and the corresponding bi-fidelity solutions (with \(r = 30\)) for a particular sample point \(z\) in the right column.
One observes that the high- and bi-fidelity solutions match well throughout the entire spatial domain, whereas the low-fidelity solutions appear to be quite inaccurate.
Fig.\ref{up_single-component_E_SD} clearly illustrates the mean and standard deviation of the bi-fidelity approximation for both \(\rho\) and \(u\), the bi-fidelity curves (red dots) precisely overlap the high-fidelity results (black lines). 
This result demonstrates that the bi-fidelity model can accurately capture both the deterministic structures and the stochastic characteristics across the random space.

\begin{figure}[!htbp]
    \centering
    \includegraphics[width=0.49\linewidth]{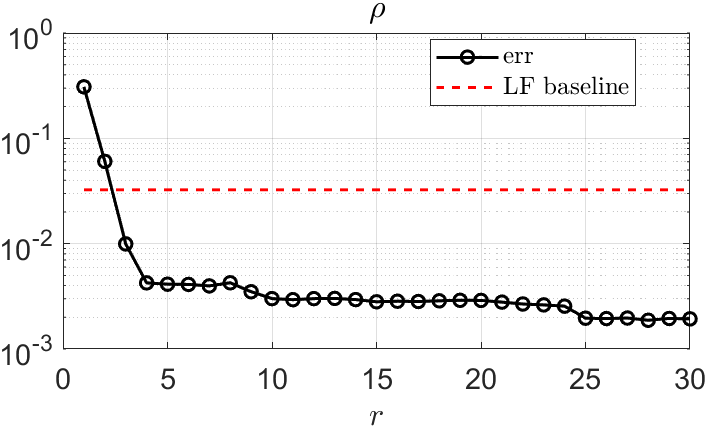}
    \includegraphics[width=0.48\linewidth]{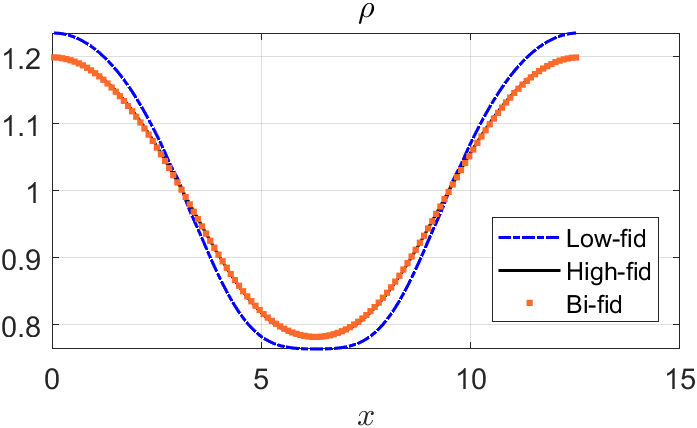}\\
    \includegraphics[width=0.48\linewidth]{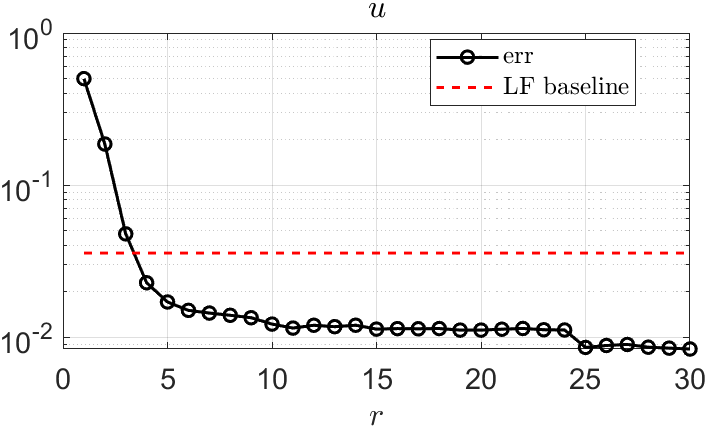}
    \includegraphics[width=0.48\linewidth]{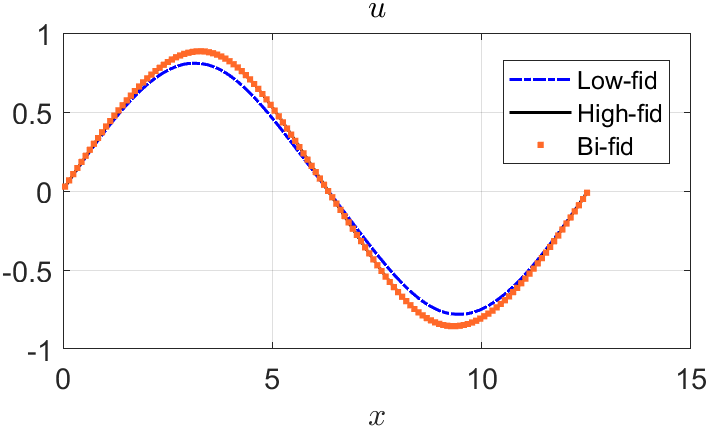}
    \caption{Average $L^2$ error relative to the number of high-fidelity runs for high-fidelity vs. low-fidelity or bi-fidelity solutions (left); comparison of low-fidelity, high-fidelity solutions, and the corresponding bi-fidelity approximations (\(r = 30\)) for a fixed sample \(z\) (right).}
    \label{up_single-component}
\end{figure}
\begin{figure}[!htbp]
    \centering
    \includegraphics[width=0.48\linewidth]{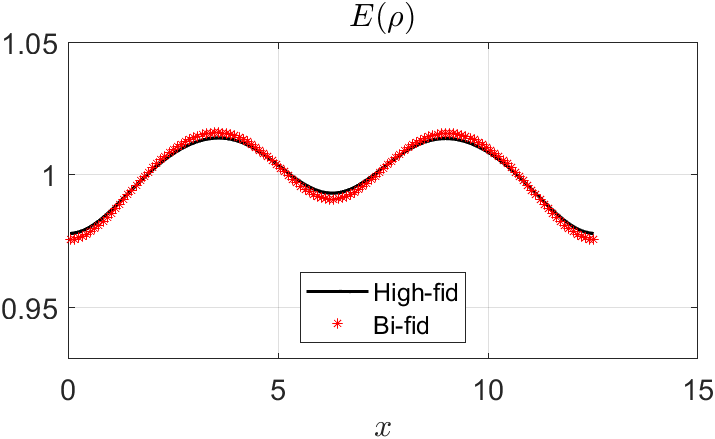}
    \includegraphics[width=0.48\linewidth]{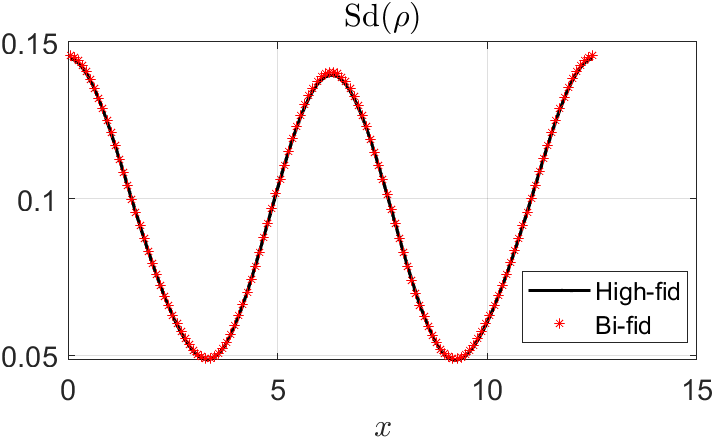}\\
    \includegraphics[width=0.50\linewidth]{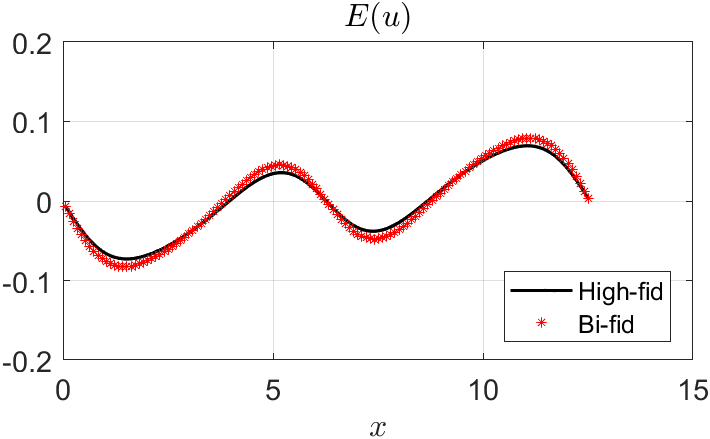}
    \includegraphics[width=0.48\linewidth]{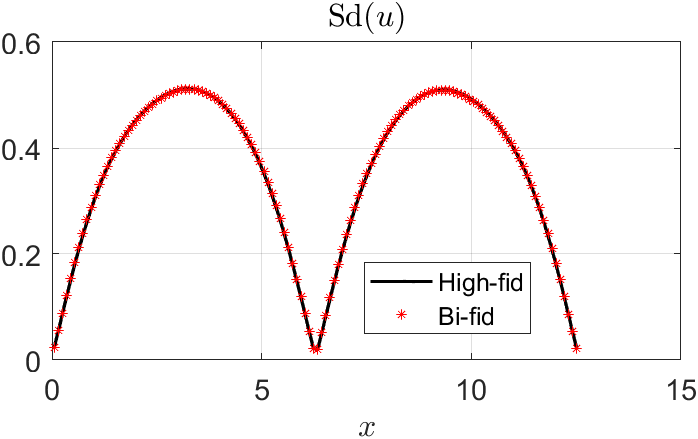}
    \caption{Mean(left) and standard deviation(right) of \(\rho\), \(u\) of high-fidelity solutions and bi-fidelity solutions with \(r = 30\).}
    \label{up_single-component_E_SD}
\end{figure}

\subsubsection{Two-Component}
In a complete description of plasma physics, the original VP model accounts for two charged species: electrons ($s=e$) and ions ($s=i$). 
In multi-species plasma simulations, capturing the disparate dynamic behaviors of different charged species—such as electrons and ions—is essential for accurate physical predictions \cite{LABRUNIE2004267,Petkaki2006AnomalousRA}. 
The co-evolution of electron and ion distribution functions self-consistently coupled through the electrostatic field governed by the Poisson equation.
The electron mass is $m_e$ and the ion mass is $m_i$, yielding a small mass ratio $\mu = m_e/m_i \ll 1$ \cite{Cheng2015}.
The two-component VP system (1D1V) with random parameters reads
\begin{align}\label{eq:two_VP}
\begin{cases}
\partial_t f_s + v\,\partial_x f_s + q_s E\,\partial_v f_s = 0, \qquad (x,v)\in\Omega_x\times\Omega_v^s,\quad s=e,i,\\[2mm]
E = -\partial_x\phi,\qquad - \partial_x^2\phi = \rho_e-\rho_i,
\end{cases}
\end{align}
where the species-indexed charge coefficients \(q_e=+1, q_i=-\mu\). 
Similarly, the corresponding pressureless two-component EP system of \eqref{eq:two_VP} can be obtained.

\begin{example}
We are considering introducing random parameters into the following two‑component plasma of initial conditions \cite{Cheng2015}, where $\mu = m_e/m_i = 1/9$ and ion temperature $T_i = 0.3$ ($\sigma_i^2 = \mu T_i = 1/30$). 
Similar to single-component initial value perturbation, we get
\begin{align}
\begin{cases}
  f_e^0(x,v,z) &= \frac{n_e}{\sqrt{2\pi}}  e^{-(v-u_e)^2/2} \Bigl[1 + S(x,z)\Bigr], \\
  f_i^0(x,v,z)   &= \frac{n_i}{\sqrt{2\pi}\,\sigma_i}e^{-(v-u_i)^2/(2\sigma_i^2)} ,\\
  \rho_e^0(x,v,z) &= n_e\Bigl[1 + S(x,z)\Bigr], \quad (\rho u)_e^0(x,v,z) = n_eu_e\Bigl[1 + S(x,z)\Bigr], \\
  \rho_i^0(x,v,z) &= n_i , \quad (\rho u)_i^0(x,v,z) =n_iu_i,
  \end{cases}
\end{align}
where $S(x,z) = \sum_{m=1}^{d_z} \frac{\alpha_0}{m} z_m \cos(m\kappa x)$ with $\kappa = 0.5, \, \alpha_0 = 0.5$. 
Let $d_z=5$, \(N_h=N_l=128\), and the random parameters are \(n_e=1+0.05z_1, \, u_e=0.01z_2, \, n_i=1+0.05z_4,\) \(u_i=0.01z_5.\)
The velocity domains are $v_e \in [-10,10]$ and $v_i \in \sqrt{\sigma_i}v_e$.
\end{example}

Fig.\ref{up_two-component} reports the convergence history and numerical solutions for the electron–ion two-species system. The left column demonstrates rapid and robust $L^2$-error convergence for $\rho_e, u_e, \rho_i, u_i$: using only $r=30$ high-fidelity runs, the bi-fidelity approximation can achieve a maximum accuracy of $\mathcal{O}(10^{-4})$, whereas the low-fidelity baseline stagnates at a consistently poor level across all variables.
The right column compares the low-, high-, and bi-fidelity solutions at a representative spatial point; the bi-fidelity results coincide with the high-fidelity ones, while the low-fidelity model exhibits deviations in both magnitude and phase, failing to capture the correct behavior.

\begin{figure}[!htbp]
    \centering
    \includegraphics[width=0.49\linewidth]{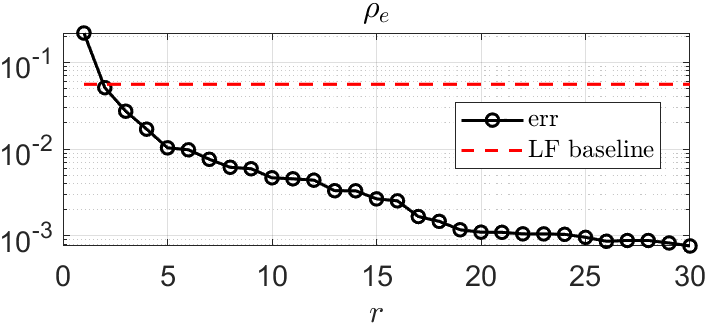}
    \includegraphics[width=0.49\linewidth]{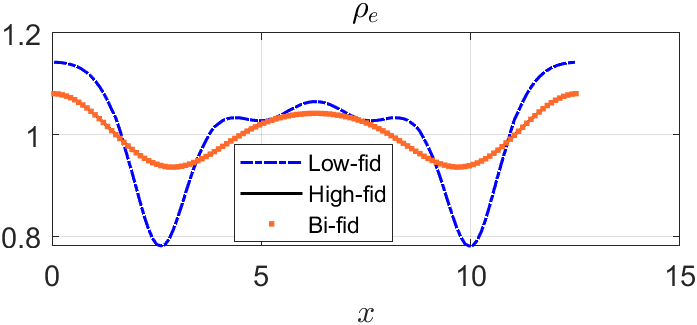}\\
    \includegraphics[width=0.49\linewidth]{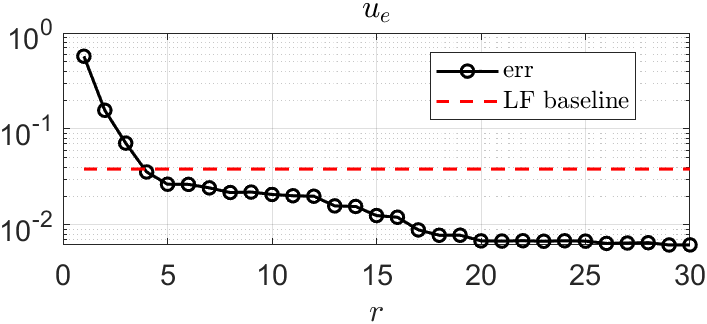}
    \includegraphics[width=0.49\linewidth]{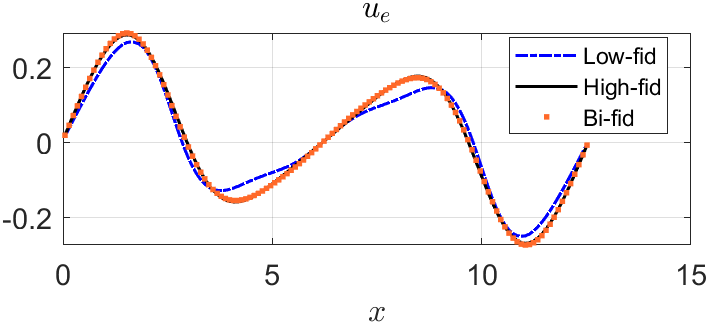}\\
    \includegraphics[width=0.49\linewidth]{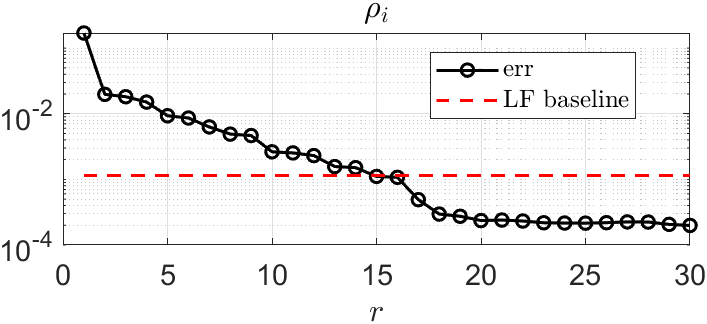}
    \includegraphics[width=0.49\linewidth]{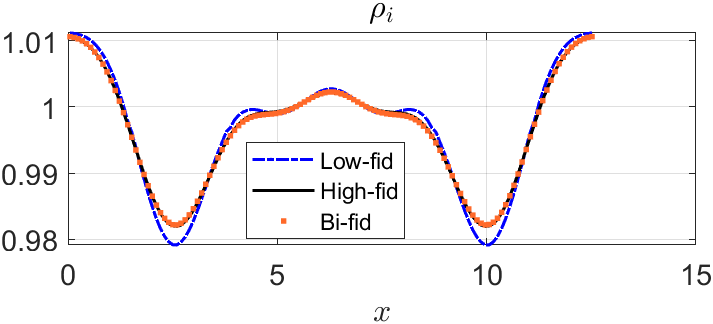}\\
    \includegraphics[width=0.49\linewidth]{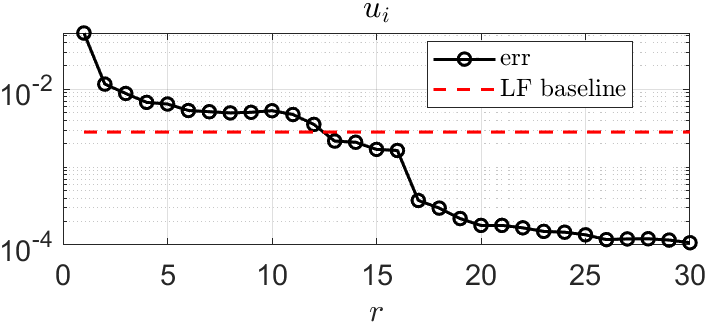}
    \includegraphics[width=0.49\linewidth]{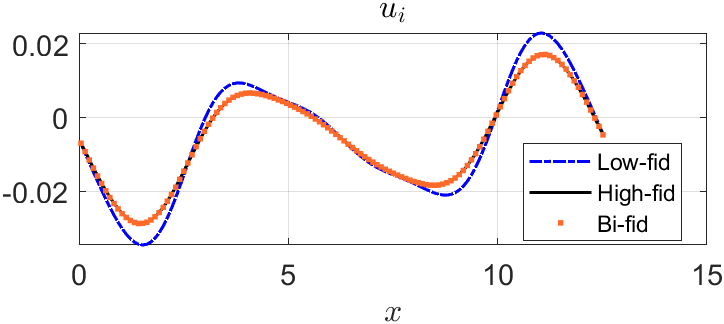}
    \caption{Two-Component: Average $L^2$ error relative to the number of high-fidelity runs for high-fidelity vs. low-fidelity or bi-fidelity solutions (left); comparison of low-fidelity, high-fidelity solutions, and the bi-fidelity solutions (\(r = 30\)) for a fixed sample \(z\) (right).}
    \label{up_two-component}
\end{figure}

Fig.\ref{up_two-component_E_SD} demonstrates the mean and standard deviation of four variables obtained from the two-component random system through the bi-fidelity approximation.
Despite the high computational cost of the high-fidelity model for this multi-component system, the bi-fidelity approach accurately captures the amplitude and spatial oscillations while achieving a significant speedup without compromising precision.

\begin{figure}[!htbp]
    \centering
    \includegraphics[width=0.49\linewidth]{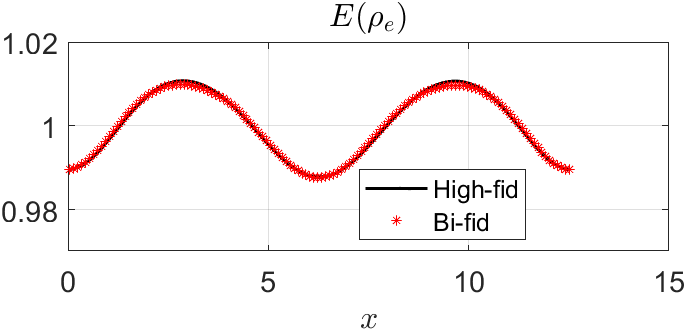}
    \includegraphics[width=0.49\linewidth]{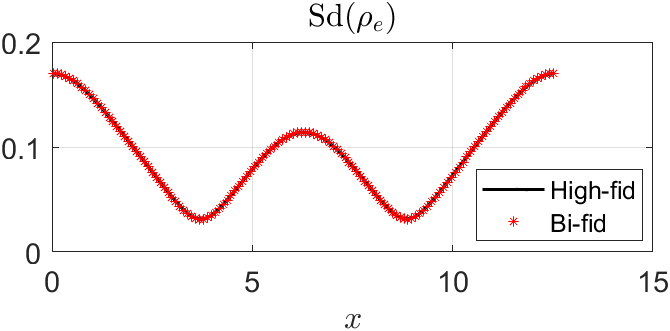}\\
    \includegraphics[width=0.49\linewidth]{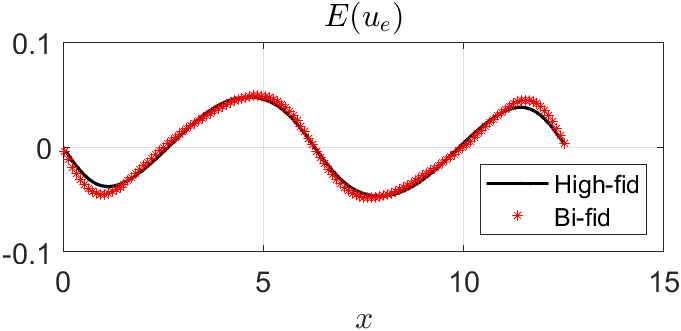}
    \includegraphics[width=0.48\linewidth]{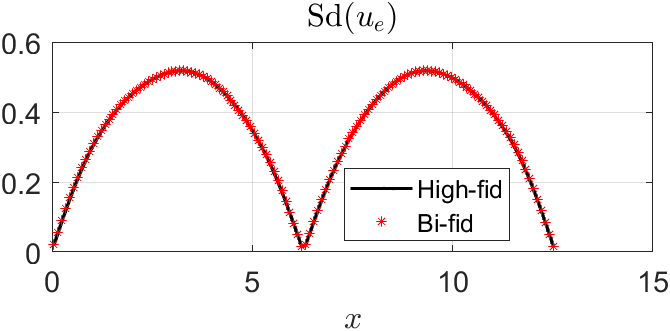}\\
    \includegraphics[width=0.49\linewidth]{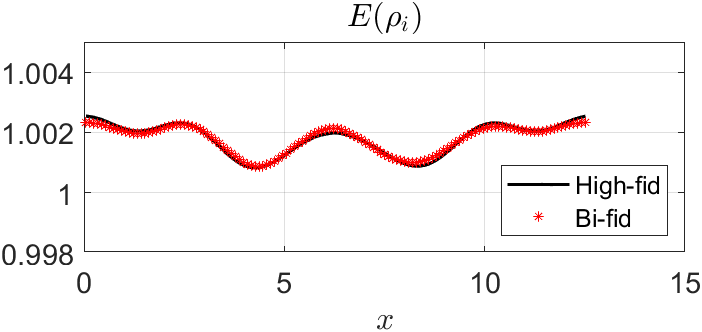}
    \includegraphics[width=0.49\linewidth]{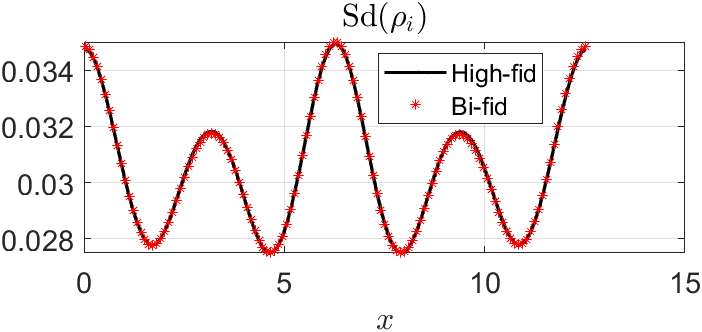}\\
    \includegraphics[width=0.48\linewidth]{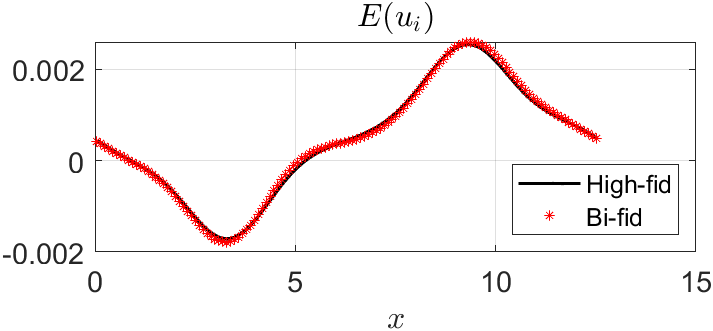}
    \includegraphics[width=0.49\linewidth]{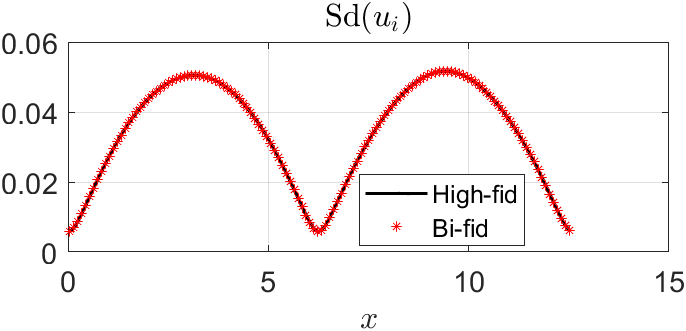}
    \caption{Two-Component: Mean(left) and standard deviation(right) of \(\rho\), \(u\) of high-fidelity solutions and bi-fidelity solutions with \(r = 30\).}
    \label{up_two-component_E_SD}
\end{figure}

\subsubsection{2D1V}
In the examples above, we have gradually verified the efficient application of the bi-fidelity algorithm in single/multi-component cases.
Next, based on the 2D1V initial value construction from the deterministic problem above, we add random parameters to the initial values that Multi‑vortex two‑stream instability in 2D1V.
\begin{example}
\begin{align}
\begin{cases}
f_0(x,y,v,z) &= \frac{1}{2v_{\text{th}}\sqrt{2\pi}}
\Bigl[\exp\!\bigl(-\tfrac{(v-u_0)^2}{2v_{\text{th}}^2}\bigr) +
\exp\!\bigl(-\tfrac{(v+u_0)^2}{2v_{\text{th}}^2}\bigr)\Bigr]
\Bigl[ 1 + S(x, z) \Bigr],\\
\rho_0(x,y,z) &= 1 + S(x, z),\\
(\rho u)_0(x,y,z) &= 0,
\end{cases}
\end{align}
where the random parameters are \(S(x, z) = \sum_{m=1}^{d_z-1} \dfrac{\alpha}{m}\, z_m \left(\cos( m \kappa_x x)+\cos( m \kappa_y y)\right),\)
\(
u_0 = 0.99 + 0.05z_{d_z}.\)
Let \(d_z=7\), \(N_h=N_l=64\), and the parameters \(v_{\text{th}}, \alpha, \kappa_x, \kappa_y\) are consistent with Example~5.2.
\end{example}

Fig.\ref{up_single-component_2D} presents the two-dimensional spatial distributions of the high-fidelity (HF), bi-fidelity (BF), and low-fidelity (LF) solutions for $\rho$ and $u$ in Example. The BF approximations accurately reproduce the complex HF spatial patterns, while the LF solutions fail to capture the essential structures. Fig.\ref{up_single-component_E_2D} further illustrate the statistical accuracy in the random space: the contour plots of the BF means and standard deviations exhibit excellent visual agreement with the HF results, with matching color scales. This confirms that the bi-fidelity method effectively captures the uncertainty propagation characteristics across the entire two-dimensional physical domain.

\begin{figure}[!htbp]
    \centering
    \includegraphics[width=0.32\linewidth]{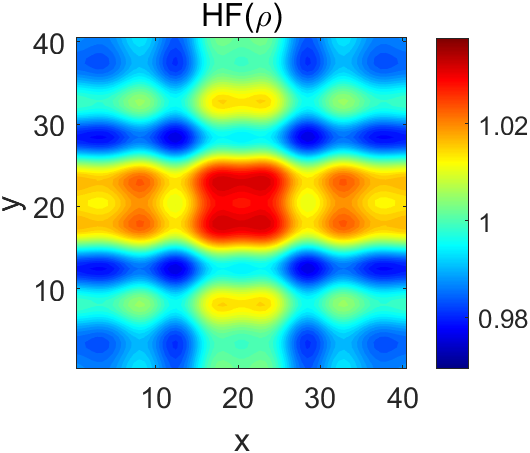}
    \includegraphics[width=0.32\linewidth]{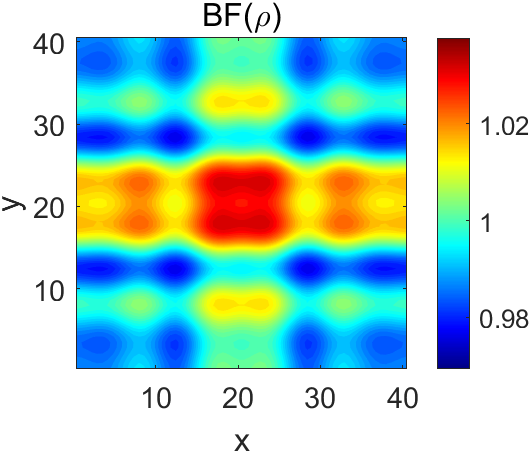}
    \includegraphics[width=0.32\linewidth]{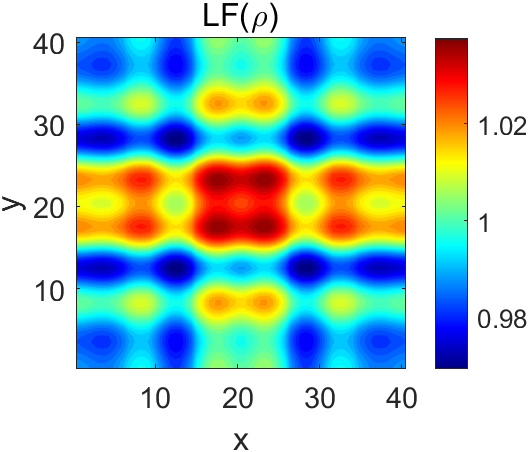}\\
    \includegraphics[width=0.32\linewidth]{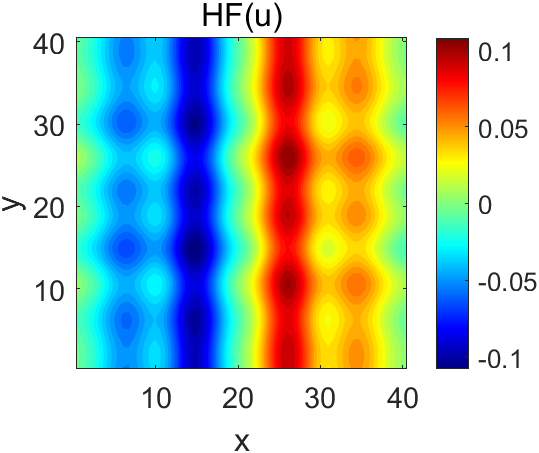}
    \includegraphics[width=0.32\linewidth]{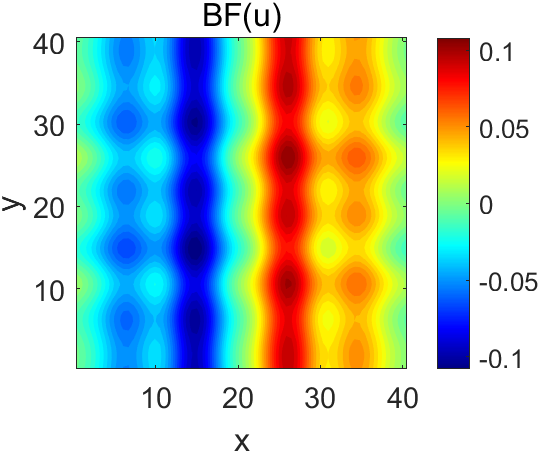}
    \includegraphics[width=0.32\linewidth]{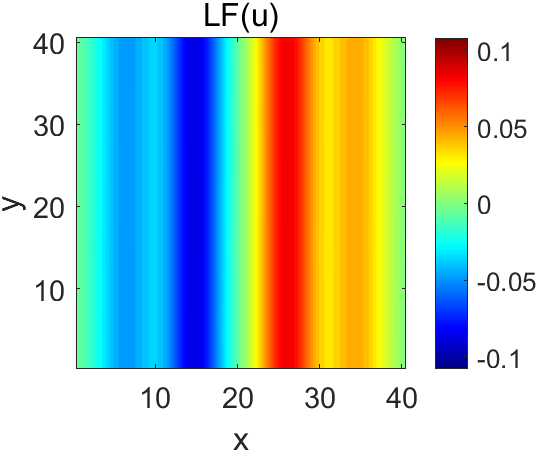}
    \caption{Comparison the two-dimensional spatial distributions of low-fidelity, high-fidelity solutions, and the corresponding bi-fidelity approximations (\(r = 30\)) for a fixed sample \(z\).}
    \label{up_single-component_2D}
\end{figure}

\begin{figure}[!htbp]
    \centering
    \includegraphics[width=0.24\linewidth]{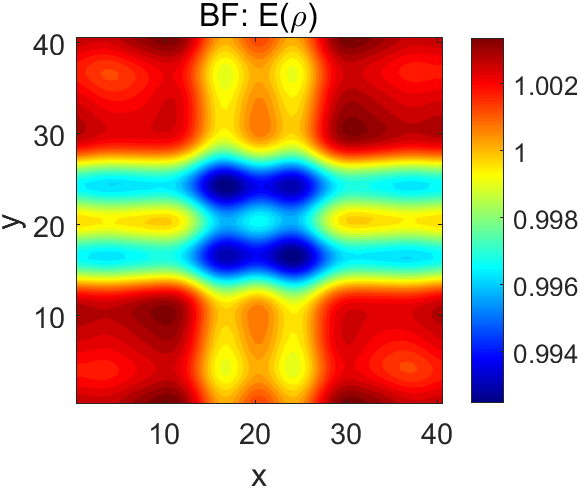}
    \includegraphics[width=0.24\linewidth]{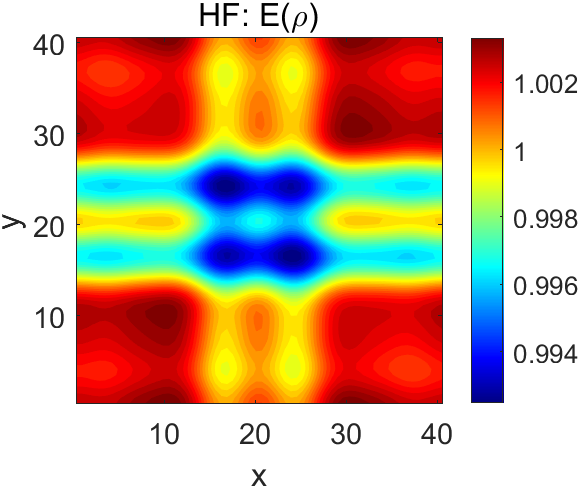}
    \includegraphics[width=0.24\linewidth]{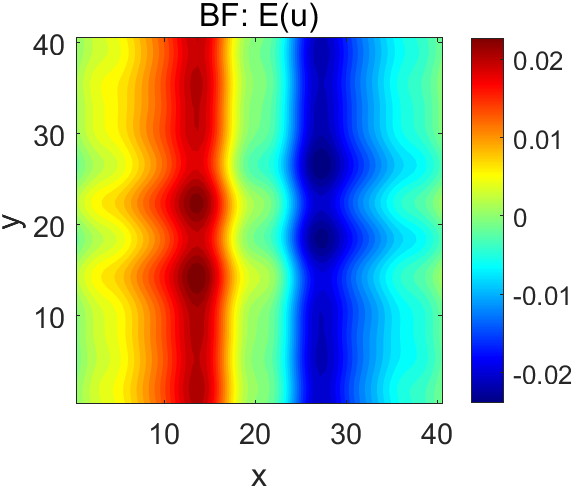}
    \includegraphics[width=0.24\linewidth]{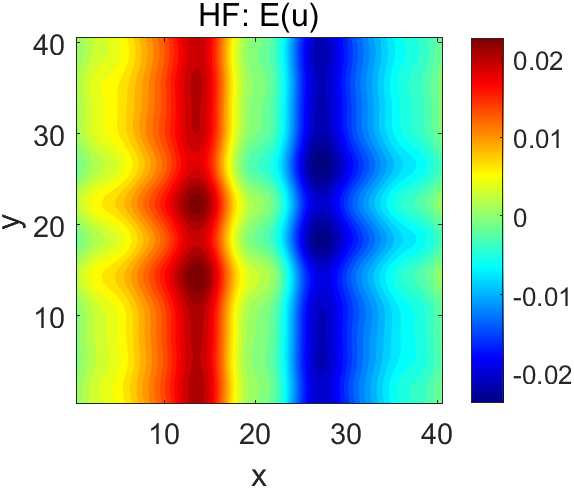}
    \includegraphics[width=0.24\linewidth]{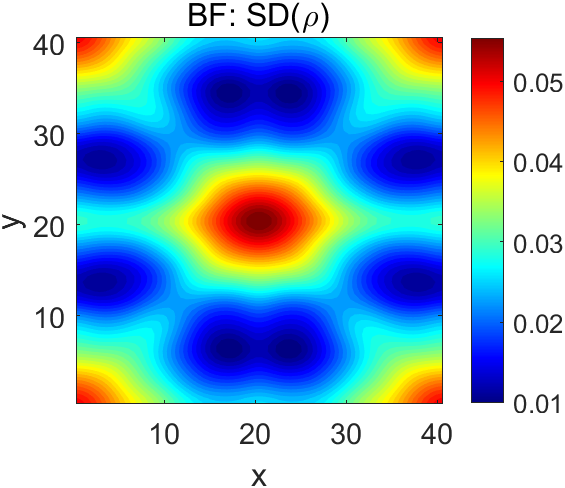}
    \includegraphics[width=0.24\linewidth]{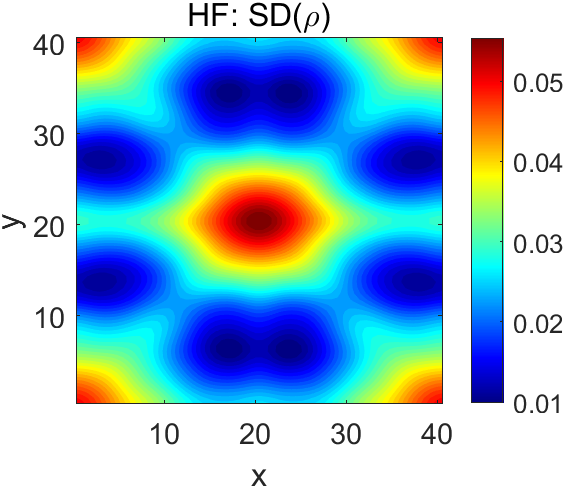}
    \includegraphics[width=0.24\linewidth]{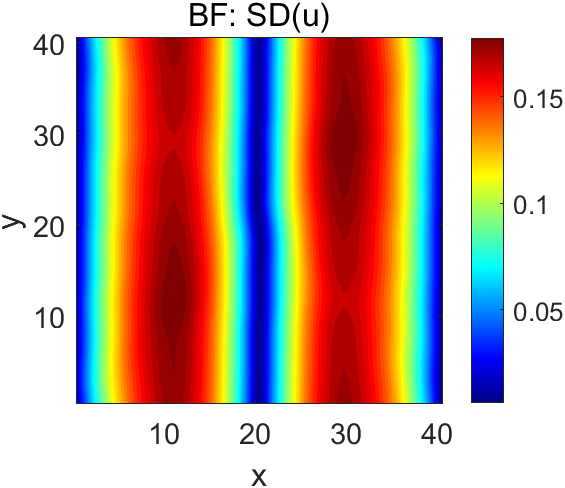}
    \includegraphics[width=0.24\linewidth]{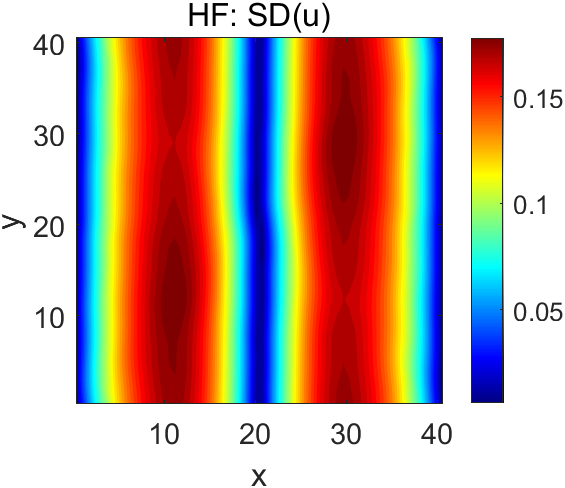}
    \caption{The two-dimensional mean(up) and standard deviation(down) of \(\rho\), \(u\) of high-fidelity solutions and bi-fidelity solutions with \(r = 30\).}
    \label{up_single-component_E_2D}
\end{figure}
\section{Conclusion}
We studied a bi‑fidelity approximation of macroscopic observables for uncertain VP problems using the conservative SL snapshots. 
The high-fidelity solver employs a Strang-splitting CSLDG scheme, which preserves structural properties (e.g., conservation and positivity), and accurately resolves fine-scale phase-space structures, thereby enhancing the bi-fidelity reconstruction of complex dynamics and uncertainty propagation.
Theoretically, we establish rigorous error estimates, decomposing the total error into several error bounds and analyzing them individually.
Numerical experiments on deterministic and uncertain problems, validate the accuracy, robustness, and computational efficiency of the proposed method. 
Future work will focus on extensions to non-splitting high-dimensional grids, multi-scale problems, or complex kinetic models.

\bibliographystyle{siamplain}
\bibliography{reference}



\end{document}


\maketitle

\section{Proof of the Lemma~4.1}
Let \((\rho_e, u_e, E_e)\) be the smooth strong solution of the pressureless EP system on the time interval \([0, T]\) \((T > 0)\), satisfying \(\rho_e \in C([0, T]; H^s(\mathbb{R}^d))\), \(u_e \in C([0, T]; H^s(\mathbb{R}^d))\), with \(s > \frac{d}{2} + 1\).
Denote \(d(u_e)(t,x) := \frac{1}{2} \left( \nabla_x u_e + (\nabla_x u_e)^T \right)\) as the symmetric strain tensor of the flow field \(u_e\), the supremum of its spectral radius in space is denoted as
\(
\| d(u_e)(t) \|_{L^\infty} = \sup_{x \in \mathbb{R}^d} \rho_{\mathrm{spec}}(d(u_e)(t,x))
\),
and assume that \(\|d(u_e)\|_{L^1([0,T]; L^\infty)} < \infty\).
Let \(f^\varepsilon \ge 0\) be a weak solution of the VP system, and suppose the total mass \(M_0 := \int_{\mathbb{R}^{2d}} f^\varepsilon(t,x,v) \, dx dv\) is conserved.

Reference \cite{Brenier01012000} has already provided the error bounds for the modulation energy \(H_\varepsilon(t)\), here we mainly revisit and summarize its important conclusions.
Following the approach in Section 3 of \cite{Brenier01012000}, the modulated total energy functional \(H_\varepsilon(t)\) for the microscopic solution \((f^\varepsilon, E^\varepsilon)\) with respect to the macroscopic strong solution \((u_e, E_e)\) is defined as:
\begin{align*}
H_\varepsilon(t) :=& \frac{1}{2} \int_{\mathbb{R}^d \times \mathbb{R}^d} |v - u_e(t,x)|^2 f^\varepsilon(t,x,v) \, dx dv + \frac{1}{2} \int_{\mathbb{R}^d} |E^\varepsilon(t,x) - E_e(t,x)|^2 \, dx\\
=& \underbrace{\frac{1}{2} \int_{\mathbb{R}^d \times \mathbb{R}^d} |v|^2 f^\varepsilon \, dx dv + \frac{1}{2} \int_{\mathbb{R}^d} |E^\varepsilon|^2 \, dx}_{\mathcal{E}_{total}(t)}+\frac{1}{2} \int_{\mathbb{R}^d} |u_e|^2 \rho^\varepsilon \, dx \\
&- \int_{\mathbb{R}^d} u_e \cdot J^\varepsilon \, dx
- \int_{\mathbb{R}^d} E^\varepsilon \cdot E_e \, dx + \frac{1}{2} \int_{\mathbb{R}^d} |E_e|^2 \, dx.
\end{align*}
Since the total energy \(\mathcal{E}_{total}(t)\) of the VP system is conserved \(\left( \frac{d}{dt} \mathcal{E}_{total}(t) = 0 \right)\), we obtain:
\begin{align*}
\frac{d}{dt} H_\varepsilon(t) = &\frac{d}{dt} \left( \frac{1}{2} \int_{\mathbb{R}^d} |u_e|^2 \rho^\varepsilon \, dx \right) - \frac{d}{dt} \left( \int_{\mathbb{R}^d} u_e \cdot J^\varepsilon \, dx \right) \\
&-  \frac{d}{dt} \left( \int_{\mathbb{R}^d} E^\varepsilon \cdot E_e \, dx \right) + \frac{1}{2} \frac{d}{dt} \left( \int_{\mathbb{R}^d} |E_e|^2 \, dx \right).
\end{align*}
Utilizing the VP mass continuity equation \(\partial_t \rho^\varepsilon = -\nabla_x \cdot J^\varepsilon\) and integration by parts:
\begin{equation}\label{err_EP_1}
\begin{split}
\frac{d}{dt} \left( \frac{1}{2} \int_{\mathbb{R}^d} |u_e|^2 \rho^\varepsilon \, dx \right) &= \int_{\mathbb{R}^d} \rho^\varepsilon u_e \cdot \partial_t u_e \, dx + \frac{1}{2} \int_{\mathbb{R}^d} |u_e|^2 (-\nabla_x \cdot J^\varepsilon) \, dx \\
&= \int_{\mathbb{R}^d} \rho^\varepsilon u_e \cdot \partial_t u_e \, dx + \int_{\mathbb{R}^d} J^\varepsilon \cdot ((u_e \cdot \nabla_x)u_e) \, dx.
\end{split}
\end{equation}
Utilizing the VP momentum conservation equation \(\partial_t J^\varepsilon = -\nabla_x \cdot \left( \int_{\mathbb{R}^d} v \otimes v f^\varepsilon \, dv \right) + \rho^\varepsilon E^\varepsilon\), and the algebraic identity \(v \otimes v = (v - u_e) \otimes (v - u_e) + v \otimes u_e + u_e \otimes v - u_e \otimes u_e\).
Using tensor contraction and the Einstein summation convention, and integration by parts:
\begin{equation}\label{err_EP_2}
\begin{split}
-\frac{d}{dt} \left( \int_{\mathbb{R}^d} u_e \cdot J^\varepsilon \, dx \right) = &-\int_{\mathbb{R}^d} \partial_t u_e \cdot J^\varepsilon \, dx  -2 \int_{\mathbb{R}^d} J^\varepsilon \cdot \left( (u_e \cdot \nabla_x)u_e\right)\, dx \\
&- \int_{\mathbb{R}^d} \nabla_x u_e : \left( \int_{\mathbb{R}^d} (v-u_e) \otimes (v-u_e) f^\varepsilon \, dv \right) \, dx \\
&+ \int_{\mathbb{R}^d} \rho^\varepsilon u_e \left( (u_e \cdot \nabla_x)u_e\right)\, dx - \int_{\mathbb{R}^d} \rho^\varepsilon u_e \cdot E^\varepsilon \, dx.
\end{split}
\end{equation}
Combining \eqref{err_EP_1} and \eqref{err_EP_2}, extracting the common factor, we get
\begin{equation}\label{err_EP_1_2}
\begin{split}
\frac{d}{dt} \left(\frac{1}{2} \int_{\mathbb{R}^d} |u_e|^2 \rho^\varepsilon \, dx \right) - & \frac{d}{dt} \left( \int_{\mathbb{R}^d} u_e \cdot J^\varepsilon \, dx \right) \\=& \int_{\mathbb{R}^d} (\rho^\varepsilon u_e - J^\varepsilon) \cdot \left[ \partial_t u_e + (u_e \cdot \nabla_x)u_e \right]\, dx - \int_{\mathbb{R}^d} \rho^\varepsilon u_e \cdot E^\varepsilon \, dx\\
&- \int_{\mathbb{R}^d} d(u_e) : \left( \int_{\mathbb{R}^d} (v-u_e) \otimes (v-u_e) f^\varepsilon \, dv \right) \, dx.
\end{split}
\end{equation}
Utilizing \(E_e = -\nabla_x \phi_e\) and \(E^\varepsilon = -\nabla_x \phi^\varepsilon\), as well as the time derivatives of the Poisson equations \( \nabla_x \cdot \partial_t E^\varepsilon = -\nabla_x \cdot J^\varepsilon\) and \( \nabla_x \cdot \partial_t E_e = -\nabla_x \cdot (\rho_e u_e)\), we have:
\begin{equation}\label{err_EP_3_4}
\begin{split}
 -  \frac{d}{dt} \left( \int_{\mathbb{R}^d} E^\varepsilon \cdot E_e \, dx \right) + \frac{1}{2} \frac{d}{dt} \left( \int_{\mathbb{R}^d} |E_e|^2 \, dx \right)= &\int_{\mathbb{R}^d} J^\varepsilon \cdot E_e \, dx + \int_{\mathbb{R}^d} \rho_e u_e \cdot E^\varepsilon \, dx \\
 &- \int_{\mathbb{R}^d} \rho_e u_e \cdot E_e \, dx.
\end{split}
\end{equation}
Now, we add \eqref{err_EP_1_2} and \eqref{err_EP_3_4} together, due to the momentum conservation equation \(\partial_t u_e + (u_e \cdot \nabla_x)u_e -E_e=0\), and recombine to obtain the complete expression:
\begin{equation}\label{err_EP_1_2_3_4}
\begin{split}
    \frac{d}{dt}H_\varepsilon(t) &= - \int_{\mathbb{R}^d} (\rho^\varepsilon - \rho_e)u_e \cdot (E^\varepsilon - E_e) \, dx \\
    &\quad - \int_{\mathbb{R}^d} d(u_e) : \left( \int_{\mathbb{R}^d} (v - u_e) \otimes (v - u_e) f^\varepsilon \, dv \right) dx.
\end{split}
\end{equation}
Let the kinetic part of the microscopic modulated energy is:
\(
\mathcal{K}_\varepsilon(t) = \frac{1}{2} \int_{\mathbb{R}^d \times \mathbb{R}^d} |v - u_e(t,x)|^2 f^\varepsilon(t,x,v) \, dx dv,
\) utilizing the property of tensor contraction \(A : (w \otimes w) \le \rho_{\mathrm{spec}}(A) |w|^2\):
\begin{align*}
\begin{split}
\left| -\int_{\mathbb{R}^d} d(u_e) : (v - u_e) \otimes (v - u_e) f^\varepsilon \, dx dv \right| &\leq \|d(u_e)\|_{L^\infty} \int_{\mathbb{R}^d \times \mathbb{R}^d} |v - u_e|^2 f^\varepsilon \, dx dv \\
&:= 2\|d(u_e)\|_{L^\infty} \mathcal{K}_\varepsilon(t).
\end{split}
\end{align*}
Similarly, let the electric field component of the microscopic modulated energy is:
\(
\mathcal{E}_\varepsilon(t) = \frac{1}{2} \int_{\mathbb{R}^d} |E^\varepsilon(t,x) - E_e(t,x)|^2 dx.
\)
Using the irrotational property of the electric field and the technique of integration by parts, we obtain:
\begin{align*}
\left|- \int_{\mathbb{R}^d} (\rho^\varepsilon - \rho_e)u_e \cdot (E^\varepsilon - E_e) \, dx\right|
\leq& 2 \|d(u_e)\|_{L^\infty} \cdot \frac{1}{2} \int_{\mathbb{R}^d} |E^\varepsilon - E_e|^2 \, dx \\
:=& 2 \|d(u_e)\|_{L^\infty} \mathcal{E}_\varepsilon(t).
\end{align*}
Combining the above two terms:
\(
    \frac{d}{dt} H_\varepsilon(t) \leq 2 \|d(u_e)(t)\|_{L^\infty} \left( \mathcal{K}_\varepsilon(t) + \mathcal{E}_\varepsilon(t) \right) \\
    = 2 \|d(u_e)(t)\|_{L^\infty} H_\varepsilon(t).
\)
Finally, applying Grönwall's inequality gets the main conclusions in Section 3 of \cite{Brenier01012000}:
\begin{align}\label{main_error}
H_\varepsilon(t) \le H_\varepsilon(0) \cdot \exp \left( \int_{0}^{t} 2 \|d(u_e)(s)\|_{L^\infty} \, ds \right).
\end{align}
To further obtain the macro-scale measurement error.
The kinetic part of the microscopic modulated energy,
applying the Cauchy-Schwarz inequality in velocity space,
and ubstituting the moment definitions \(J^\varepsilon = \int_{\mathbb{R}^d} v f^\varepsilon dv\) and \(\rho^\varepsilon = \int_{\mathbb{R}^d} f^\varepsilon dv\):
\[
|J^\varepsilon(t,x) - \rho^\varepsilon(t,x)u_e(t,x)|^2 \le \rho^\varepsilon(t,x) \int_{\mathbb{R}^d} |v - u_e(t,x)|^2 f^\varepsilon(t,x,v) \, dv.
\]
In the non-vacuum region \(\{ \rho^\varepsilon(t,x) > 0 \}\), substituting the microscopic mean velocity \(u^\varepsilon = J^\varepsilon / \rho^\varepsilon\), and integrating over physical space \(x \in \mathbb{R}^d\), we obtain:
\begin{align}\label{err_rho_u}
    \int_{\mathbb{R}^d} \rho^\varepsilon(t,x)|u^\varepsilon(t,x) - u_e(t,x)|^2 \, dx \le 2 \mathcal{K}_\varepsilon(t).
\end{align}
Similarly, for the electric field component of the microscopic modulated energy,
subtracting the Poisson equations of the two systems yields:
\(
- \Delta_x (\phi^\varepsilon - \phi_e) = \rho^\varepsilon - \rho_e \implies \phi^\varepsilon - \phi_e = (-\Delta_x)^{-1} \left( \rho^\varepsilon - \rho_e\right).
\)
Since \(E^\varepsilon - E_e = -\nabla_x(\phi^\varepsilon - \phi_e)\), utilizing Green's identity \(\int_{\mathbb{R}^d}|\nabla\phi|^2 dx=\int_{\mathbb{R}^d} \phi (-\Delta\phi) dx\), we get the
\(
\int_{\mathbb{R}^d} |E^\varepsilon - E_e|^2 dx = \int_{\mathbb{R}^d} |\nabla_x (\phi^\varepsilon - \phi_e)|^2 dx
= \left\| \rho^\varepsilon - \rho_e \right\|_{{H}^{-1}(\mathbb{R}^d)}^2.
\)
Now, we obtain:
\begin{align}\label{err_rho}
\mathcal{E}_\varepsilon(t) = \frac{1}{2} \int_{\mathbb{R}^d} |E^\varepsilon - E_e|^2 dx = \frac{1}{2} \left\| \rho^\varepsilon(t) - \rho_e(t) \right\|_{{H}^{-1}}.
\end{align}
Then, adding \eqref{err_rho_u} and \eqref{err_rho} yields:
\begin{align*}
   &\frac{1}{2} \mathcal{D}_{macro}^2 \left( (\rho^\varepsilon(t), u^\varepsilon(t)), (\rho_e(t), u_e(t)) \right)\\
   &= \frac{1}{2} \int_{\mathbb{R}^d} \rho^\varepsilon(t,x)|u^\varepsilon(t,x) - u_e(t,x)|^2 \, dx + \frac{1}{2} \left\| \rho^\varepsilon(t) - \rho_e(t) \right\|_{{H}^{-1}}\\
   &\le \mathcal{K}_\varepsilon(t) + \mathcal{E}_\varepsilon(t) =H_\varepsilon(t).
\end{align*}
According to the main theorem derived within the framework of \cite{Brenier01012000}, see above \eqref{main_error}), this leads to the core lower-bound inequality:
\[
\mathcal{D}_{macro}^2 \left( (\rho^\varepsilon(t), u^\varepsilon(t)), (\rho_e(t), u_e(t)) \right)
\le 2 H_\varepsilon(0) \exp \left( 2 \int_0^t \left\| d(u_e)(s) \right\|_{L^\infty(\mathbb{R}^d)} ds \right)\le C'.
\]
This completes the rigorous proof of the Lemma~4.1.

\section{Proof of the Lemma~4.2}
\label{app:error_proof_VP}
The rigorous convergence analysis of the Strang splitting algorithm for the spatially discontinuous Galerkin approximation of the VP equation originates from reference \cite{Einkemmer2014}. Here, we mainly review the important conclusions and outline the proof ideas.

Let the numerical solution be given by \(\tilde{f}_{n+1} = \tilde{S}^{(\mathcal{A})} P e^{\tau \mathcal{B}(\tilde{f}_{n+1/2})} \tilde{S}^{(\mathcal{A})} \tilde{f}_n\), and the exact solution be \(f_{n+1} = f(n\tau + \tau)\). Our goal is to decompose the error \(e_{n+1} = \|\tilde{f}_{n+1} - f_{n+1}\|_2\).
To assist the derivation, we introduce intermediate state  \(\tilde{f}_{n+1/2} = P e^{\frac{\tau}{2}\mathcal{A}}\tilde{f}_n\), and reference state \(f_{n+1/2}^{ref} = P e^{\frac{\tau}{2} \mathcal{A}} P f_n\). Utilizing the triangle inequality, we can split \(e_{n+1}\) by introducing intermediate terms through a four-step process:

\textbf{Step 1:}
We replace \(f_{n+1}\) with \(P f_{n+1} + (I - P)f_{n+1}\), which yields:
\begin{align*}
e_{n+1} &= \|\tilde{f}_{n+1} - f_{n+1}\|_2 \\
&= \left\| \big(\tilde{f}_{n+1} - P f_{n+1}\big) + \big(P f_{n+1} - f_{n+1}\big) \right\|_2 \\
&\le \|\tilde{f}_{n+1} - P f_{n+1}\|_2 + \|P f_{n+1} - f_{n+1}\|_2.
\end{align*}
Notice that the second term in the inequality above is the projection error of the smooth solution at the current time, and its error bound can be derived from Theorem 2.3 in \cite{Einkemmer2014} regarding the classical DG projection error: \(\|P f_{n+1} - f_{n+1}\|_2\le C h^{p+1}\).
Immediately following, We denote the remaining part as \(R_0 = \|\tilde{f}_{n+1} - P f_{n+1}\|_2\) and proceed to further decompose \(R_0\).

\textbf{Step 2:}
We introduce the reference term \(\tilde{S}^{(\mathcal{A})} P e^{\tau \mathcal{B}(f_{n+1/2}^{ref})} \tilde{S}^{(\mathcal{A})} \tilde{f}_n\) (i.e., we assume the operator \(\mathcal{B}\) uses the reference state):
\begin{align*}
R_0 &= \|\tilde{f}_{n+1} - P f_{n+1}\|_2 \\
&= \left\| \tilde{S}^{(\mathcal{A})} P e^{\tau \mathcal{B}(\tilde{f}_{n+1/2})} \tilde{S}^{(\mathcal{A})} \tilde{f}_n - P f_{n+1} \right\|_2 \\
&\le \underbrace{\left\| \tilde{S}^{(\mathcal{A})} P \left(e^{\tau \mathcal{B}(\tilde{f}_{n+1/2})} - e^{\tau \mathcal{B}(f_{n+1/2}^{ref})}\right) \tilde{S}^{(\mathcal{A})} \tilde{f}_n \right\|_2}_{\text{Term 1: Nonlinear operator state error}}\\
&+ \left\| \tilde{S}^{(\mathcal{A})} P e^{\tau \mathcal{B}(f_{n+1/2}^{ref})} \tilde{S}^{(\mathcal{A})} \tilde{f}_n - P f_{n+1} \right\|_2,
\end{align*}
where the sensitivity of the nonlinear evolution operator \( S^{(\mathcal{B})} \) with respect to the input state is Lipschitz continuous over the time interval \( \tau \). Combined with the uniformly boundedness assumption on the electric field \( E \), and the definitions of the intermediate state. Using the nonexpansiveness of the projection operator, and using the constant variation formula (Lemma~2.5 in reference  \cite{Einkemmer2014}), we obtain:
\begin{align*}
\text{Term 1} &\leq C\tau \|\tilde{f}_{n+1/2} - f_{n+1/2}^{\text{ref}}\|_2\\
&\leq C\tau \|P e^{\frac{\tau}{2}\mathcal{A}}\tilde{f}_n - P e^{\frac{\tau}{2}\mathcal{A}} f_n\|_2
     + \|P e^{\frac{\tau}{2}\mathcal{A}} f_n - P e^{\frac{\tau}{2}\mathcal{A}} P f_n\|_2 \\
&\leq C\tau \|\tilde{f}_n - f_n\|_2 + \|P e^{\frac{\tau}{2}\mathcal{A}} (I - P) f_n\|_2\leq C\tau e_n + C\tau h^{p+1}.
\end{align*}

\textbf{Step 3:}
Let \(R_1 = \left\| \tilde{S}^{(\mathcal{A})} P e^{\tau \mathcal{B}(f_{n+1/2}^{ref})} \tilde{S}^{(\mathcal{A})} \tilde{f}_n - P f_{n+1} \right\|_2\), we introduce the reference input term \(\tilde{S}^{(\mathcal{A})} P e^{\tau \mathcal{B}(f_{n+1/2}^{ref})} \tilde{S}^{(\mathcal{A})} P f_n\):
\begin{align*}
R_1 &= \left\| \tilde{S}^{(\mathcal{A})} P e^{\tau \mathcal{B}(f_{n+1/2}^{ref})} \tilde{S}^{(\mathcal{A})} \tilde{f}_n - P f_{n+1} \right\|_2 \\
&\le \underbrace{\left\| \tilde{S}^{(\mathcal{A})} P e^{\tau \mathcal{B}(f_{n+1/2}^{ref})} \tilde{S}^{(\mathcal{A})} \big(\tilde{f}_n - P f_n\big) \right\|_2}_{\text{Term 2: Propagation of current global error}} + \left\| \tilde{S}^{(\mathcal{A})} P e^{\tau \mathcal{B}(f_{n+1/2}^{ref})} \tilde{S}^{(\mathcal{A})} P f_n - P f_{n+1} \right\|_2.
\end{align*}
Term 2 accounts for the linear propagation of the pre-existing error from the current time step to the next.
\(P\) is an orthogonal projection, and the operator norms of the translation operators are all \(1\) (in the \(L^2\) setting). Hence the overall operator norm is no greater than \(1\):
\[
\text{Term 2} \leq \|\tilde{f}_n - P f_n\|_2 \leq \|\tilde{f}_n - f_n\|_2 + \|f_n - P f_n\|_2\leq e_n + C h^{p+1}.
\]

\textbf{Step 4:}
Let  \(R_2 = \left\| \tilde{S}^{(\mathcal{A})} P e^{\tau \mathcal{B}(f_{n+1/2}^{ref})} \tilde{S}^{(\mathcal{A})} P f_n - P f_{n+1} \right\|_2\), we introduce the reference input term \(\tilde{S}^{(\mathcal{A})} P e^{\tau \mathcal{B}(f_{n+1/2}^{ref})} \tilde{S}^{(\mathcal{A})} f_n\):
\begin{align*}
R_2 &= \left\| \tilde{S}^{(\mathcal{A})} P e^{\tau \mathcal{B}(f_{n+1/2}^{ref})} \tilde{S}^{(\mathcal{A})} P f_n - P f_{n+1} \right\|_2 \\
&\le \underbrace{\left\| \tilde{S}^{(\mathcal{A})} P e^{\tau \mathcal{B}(f_{n+1/2}^{ref})} \tilde{S}^{(\mathcal{A})} \big(P f_n - f_n\big) \right\|_2}_{\text{Term 3: Propagation of last-step projection error}} + \underbrace{\left\| \tilde{S}^{(\mathcal{A})} P e^{\tau \mathcal{B}(f_{n+1/2}^{ref})} \tilde{S}^{(\mathcal{A})} f_n - P f_{n+1} \right\|_2}_{\text{Term 4: Single-step consistency error}}
\end{align*}

 Term 3 is the propagation of the initial projection error \(P f_n - f_n\) from the previous time step. Since  Theorem 2.3 in \cite{Einkemmer2014} and the non-expansiveness of an operator, this term is directly bounded: \(\text{Term 3}\le C h^{p+1}.\)

 Term 4 corresponds to the fully discrete single-step error triggered by the pure initial exact state \(f_n\), which precisely corresponds to Theorem 2.8 (Local Consistency Theorem) in \cite{Einkemmer2014, Einkemmer201452}:  \(\text{Term 4}\le C\left(\tau^3 + \tau h^{p+1} + h^{p+1}\right).\)

\textbf{Final combination:}
By combining the five components of the error generated from Step 1 to Step 4, we arrive at the final global error decomposition
we extract and merge similar terms, the error inequality can be written as:
\begin{align*}
e_{n+1} &= \underbrace{\left\| \tilde{S}^{(\mathcal{A})} P \left(e^{\tau \mathcal{B}(\tilde{f}_{n+1/2})} - e^{\tau \mathcal{B}(f_{n+1/2}^{ref})}\right) \tilde{S}^{(\mathcal{A})} \tilde{f}_n \right\|_2}_{\text{Term 1: Nonlinear operator error}}\\
&\quad +\underbrace{\left\| \tilde{S}^{(\mathcal{A})} P e^{\tau \mathcal{B}(f_{n+1/2}^{ref})} \tilde{S}^{(\mathcal{A})} \big(\tilde{f}_n - P f_n\big) \right\|_2}_{\text{Term 2: Stability propagation of current error}} \\
&\quad + \underbrace{\left\| \tilde{S}^{(\mathcal{A})} P e^{\tau \mathcal{B}(f_{n+1/2}^{ref})} \tilde{S}^{(\mathcal{A})} \big(P f_n - f_n\big) \right\|_2}_{\text{Term 3: Propagation of last-step projection error}}\\
&\quad + \underbrace{\left\| \tilde{S}^{(\mathcal{A})} P e^{\tau \mathcal{B}(f_{n+1/2}^{ref})} \tilde{S}^{(\mathcal{A})} f_n - P f(n\tau + \tau) \right\|_2}_{\text{Term 4: Single-step full-discrete consistency error }} \\
&\quad + \underbrace{\left\| P f(n\tau + \tau) - f(n\tau + \tau) \right\|_2}_{\text{Term 5: Final smooth projection error}}\\
&\le (1 + C\tau)e_n + C\tau h^{p+1} + Ch^{p+1} + Ch^{p+1} + C\tau^3 + C\tau h^{p+1} + Ch^{p+1} + Ch^{p+1} \\
&\le (1 + C\tau)e_n + C\left(\tau^3 + \tau h^{p+1} + h^{p+1}\right).
\end{align*}

Then, assume the total simulation time is \(T = N_t\tau\), and the initial error satisfies \(e_0 = \|P f_0 - f_0\| \le Ch^{p+1}\). By expanding the above recurrence inequality, we have:
\begin{align*}
e_N &\le (1 + C\tau)^N e_0 + \sum_{k=0}^{N-1} (1 + C\tau)^{N-1-k} C\left(\tau^3 + \tau h^{p+1} + h^{p+1}\right) \\
&\le e^{CT} e_0 + N \cdot e^{CT} \cdot C\left(\tau^3 + \tau h^{p+1} + h^{p+1}\right).
\end{align*}
Substituting \(N_t = \frac{T}{\tau}\), and recognizing that the constants \(C, T, e^{CT}\) are all bounded constants (which can be uniformly denoted as \(C\)), and using the Discrete Gr\"onwall Lemma, we obtain:
\begin{equation*}
\sup_{0 \le n \le N} e_n \le Ch^{p+1} + \frac{T}{\tau} \cdot C\left(\tau^3 + \tau h^{p+1} + h^{p+1}\right)
\le C \left( \tau^2 + h^{p+1} + \frac{h^{p+1}}{\tau} \right).
\end{equation*}

\bibliographystyle{siamplain}
\bibliography{references}